\documentclass[a4paper,10pt,reqno]{amsart}

\title{Lattice Point Counting in Sectors of Hyperbolic 3-space}

\author{Niko Laaksonen}
\thanks{The author was supported by the 150th Anniversary Postdoctoral Mobility Grant from the London Mathematical Society.}
\address{Department of Mathematics, University College London, Gower Street, London WC1E 6BT, United Kingdom}
\address{Department of Mathematical Sciences, University of Copenhagen, Universitetspark 5, 2100 Copenhagen \O, Denmark}
\email{n.laaksonen@ucl.ac.uk}

\date{\today}
\subjclass[2010]{Primary 11F72; Secondary 11N36}
\keywords{Lattice-point counting; hyperbolic space; automorphic form; large sieve}

\usepackage[backend=bibtex,bibencoding=ascii,style=numeric,firstinits=true]{biblatex}
\addbibresource{cocompact.bib}
\DeclareFieldFormat[article,inbook,incollection,inproceedings,patent,thesis,unpublished]{citetitle}{#1\isdot}
\DeclareFieldFormat[article,inbook,incollection,inproceedings,patent,thesis,unpublished]{title}{#1\isdot}

\renewcommand*{\intitlepunct}{\space}
\renewbibmacro{in:}{%
    \ifentrytype{article}{}{\printtext{\bibstring{in}\intitlepunct}}}
\DeclareFieldFormat[article]{volume}{\textbf{#1}}
\DeclareFieldFormat[article]{number}{\mkbibparens{#1}}
\renewbibmacro*{volume+number+eid}{%
    \printfield{volume}%
    \printfield{number}%
    \setunit{\addcomma\space}%
    \printfield{eid}}

\AtEveryBibitem{
    \clearfield{issn}
    \clearfield{url}
    \clearfield{bdsk-url-1}
    \clearfield{doi}
    \clearfield{isbn}
}
\DeclareNameAlias{sortname}{last-first}
\DeclareNameAlias{default}{last-first}

\usepackage{setspace} 
\onehalfspacing

\usepackage[pdfauthor={Niko Laaksonen},
    pdftitle={Lattice Point Counting in Sectors of Hyperbolic 3-space},
    pdfkeywords={},
    pdfcreator={Pdflatex}]
{hyperref}
\hypersetup{colorlinks=true}


\usepackage{amssymb,amsthm}
\usepackage{mathtools} 

\usepackage[margin=1in]{geometry}

\usepackage{mathrsfs}

\usepackage{tikz}
\usetikzlibrary{calc}

\usepackage{dsfont} 

\usepackage{etoolbox} 

\usepackage{enumerate}

\newcommand{\reals}{\mathbb{R}}

\newcommand{\complex}{\mathbb{C}}
\newcommand{\uph}{\mathbb{H}^{2}}

\newcommand{\uphs}{\mathbb{H}^{3}}

\newcommand{\hypmodg}{\Gamma\backslash\hyps}
\newcommand{\tendsto}{\rightarrow}

\newcommand{\func}[3]{#1\!:\!#2\longrightarrow #3}

\newcommand{\hyps}{\mathcal{H}}
\newcommand{\hypp}{\mathcal{P}}

\newcommand{\slr}{\mathrm{SL}_{2}(\mathbb{R})}
\newcommand{\pslr}{\mathrm{PSL}_{2}(\mathbb{R})}
\newcommand{\pslz}{\mathrm{PSL}_{2}(\mathbb{Z})}

\newcommand{\pslc}{\mathrm{PSL}_{2}(\mathbb{C})}

\newcommand{\pslzi}{\mathrm{PSL}_{2}(\mathbb{Z}[i])}
\newcommand{\fixed}{H}
\newcommand{\cosets}{\fixed\backslash\Gamma}

\newcommand{\fund}[2][]{\mathcal{F}_{#1}\ifstrempty{#2}{}{(#2)}}
\newcommand{\id}{\mathrm{id}}
\newcommand{\period}{\hat{u}}

\newcommand{\ind}{\mathds{1}}

\newcommand{\proj}[1]{{#1}_{0}}

\renewcommand{\vec}[1]{\mathbf{#1}}
\renewcommand{\d}[1]{\,d#1}

\usepackage{xparse} 
\usepackage{mathtools} 

\DeclareDocumentCommand{\pd}{O{} O{} m}{\frac{\partial^{#1}#2}{\partial{#3}^{#1}}}
\DeclareDocumentCommand{\error}{O{X} O{p}}{E(#2,#1)}
\DeclareDocumentCommand{\eerror}{O{f} O{X} O{p}}{E_{#1}(#3,#2)}
\DeclareDocumentCommand{\ssum}{O{T} O{X} O{p}}{S_{#1}(#3,#2)}

\let\abs\undefined
\DeclarePairedDelimiter{\abs}{\lvert}{\rvert}
\DeclarePairedDelimiter{\norm}{\lVert}{\rVert}
\DeclarePairedDelimiterX{\inprod}[2]{\langle}{\rangle}{#1,#2}

\providecommand\given{}
\newcommand{\SetSymbol}{\nonscript\: :\nonscript\:\mathopen{}\allowbreak}
\DeclarePairedDelimiterX\Set[1]\{\}{%
    \renewcommand\given{\SetSymbol}
    #1
}

\DeclareMathOperator{\arcsinh}{arcsinh}
\DeclareMathOperator{\arccosh}{arccosh}

\DeclareMathOperator{\im}{Im}
\DeclareMathOperator{\stab}{Stab}
\DeclareMathOperator{\vol}{vol}

\newtheorem{thm}{Theorem}[section]
\newtheorem{lemma}{Lemma}[section]
\newtheorem{prop}{Proposition}[section]
\theoremstyle{definition}
\newtheorem{remark}{Remark}

\usepackage{abstract}



\begin{document}
	\maketitle
    \begin{center}
        \begin{minipage}{.7\textwidth}
            \begin{abstract}
                \footnotesize
                Let $\Gamma$ be a cocompact discrete subgroup of $\pslc$ and denote by $\hyps$ the three dimensional upper half-space.
                For a $p\in\hyps$,
                we count the number of points in the orbit $\Gamma p$, according to their distance, $\arccosh X$, from
                a totally geodesic hyperplane. The main term in $n$ dimensions was obtained by Herrmann
                for any subset of a totally geodesic submanifold. We prove a pointwise error term of $O(X^{3/2})$
                by extending the method of Huber and Chatzakos--Petridis to three dimensions.
                By applying Chamizo's large sieve inequalities we obtain the conjectured
                error term $O(X^{1+\epsilon})$ on average in the spatial aspect.
                We prove a corresponding large sieve
                inequality for the radial average and explain why it only improves on the pointwise bound by $1/6$.
            \end{abstract}
        \end{minipage}
    \end{center}

    \section{Introduction}\label{sec:intro}
    Let $\Gamma$ be a discrete group acting discontinuously on a hyperbolic space $\hyps$ and denote
    the quotient space by $M=\hypmodg$.
    The standard hyperbolic lattice point problem asks to count the number of points in the orbit $\Gamma p$ within a given distance from some fixed point $q\in\hyps$.
    For example, in two dimensions the counting function is
    \[N(z,w,X) = \#\Set{\gamma\in\Gamma\given 4u(\gamma z, w)+2\leq X},\]
    where $u$ is the standard point-pair invariant on $\uph$ and $z,w\in\uph$, and it measures
    the number of lattice points $\gamma z$ in a hyperbolic disc of radius $\arccosh(X/2)$
    centered at $w$. This problem was first considered by e.g.~Huber and Selberg. Selberg proved that for fixed $z,w\in\uph$,
    \[N(z,w,X)=\sqrt{\pi}\sum_{s_{j}\in(1/2,1]}\frac{\Gamma(s_{j}-1/2)}{\Gamma(s_{j}+1)}u_{j}(z)\overline{u}_{j}(w)+E(z,w,X),\]
    where the error term satisfies $E(z,w,X)=O(X^{2/3})$. The bound on the error term has
    not been improved for any cofinite $\Gamma$ or any choice of points $z,w\in\uph$.
    To find more evidence of the conjectured error term $E(z,w,X)=O(X^{1/2+\epsilon})$, it is useful to consider various averages.
    For example, \textcite{hill2005} look at the variance of the counting function in terms of the centre over the whole fundamental domain
    of any cofinite $\Gamma$ in hyperbolic $n$-space.
    For the case $\hyps=\uph$ and $\Gamma=\pslz$, with no eigenvalues $\lambda_{j}\leq 1/4$, their result is
    \[\int_{\Gamma\backslash\uph}\abs[\bigg]{N(z,w,X)-\frac{\pi}{\vol(\Gamma\backslash\uph)}X}^{2}\d{\mu(w)}=O(X),\]
    where $\mu(w)$ is the standard hyperbolic measure on $\uph$.
    On the other hand, \textcite{petridis2015} looked at a local average of $N(z,z,X)$ over $z$. Suppose that $f$ is smooth, non-negative, compactly supported
    function on $M$. For $\Gamma=\pslz$ they proved that
    \[\int_{\Gamma\backslash\uph}f(z)N(z,z,X)\d{\mu(z)}=\frac{\pi X}{\vol(\Gamma\backslash\uph)}\int_{\Gamma\backslash\uph}f(z)\d{\mu(z)}+O(X^{7/12+\epsilon}),\]
    where the error term depends on $\epsilon$ and $f$ only.
    This improves Selberg's bound halfway to the expected $1/2+\epsilon$. Their method requires
    knowledge of the average rate of QUE for Maa\ss~cusp forms on $M$ and other arithmetic information only available to groups similar to $\pslz$.
    In 1996~\textcite{chamizo21996} showed that it is possible to apply large sieve methods on $M$. As an application, he proved that by averaging
    over a large number of radii, one gets the expected bound on the error term $E(z,w,X)$:
    \begin{equation}\label{eq:chamizoresult}
        \frac{1}{X}\int_{X}^{2X}\abs{E(z,w,x)}^{2}\d{x}=O(X\log^{2}X).
    \end{equation}
    Furthermore, Chamizo also proves a similar result for the second and fourth moments of discrete averages over sufficiently spaced centres, which leads to
    \begin{equation}\label{eq:chamizoresult2}
        \biggl(\int_{\Gamma\backslash\uph}\abs{E(z,w,X)}^{2m}\d{\mu(z)}\biggr)^{\frac{1}{2m}}\!\!=O(X^{1/2}\log X),
    \end{equation}
    for $m=1,2$.

    Instead of measuring the distance between two points of $\hyps$, it is also possible to consider geodesic segments between various subspaces of $M$.
    In two dimensions,~\textcite{huber1956} looked at geodesic segments between a point and a fixed closed geodesic $\ell$.
    The geodesic $\ell$ corresponds to a hyperbolic conjugacy
    class $\mathfrak{H}$, given by some power $\nu$ of a primitive hyperbolic element $g\in\Gamma$. For cocompact $\Gamma$,
    Huber explained that counting
    \begin{equation}\label{eq:huberdef}
        N_{z}(T)=\#\Set{\gamma\in\mathfrak{H}\given d(z,\gamma z)\leq T}
    \end{equation}
    is equivalent to counting
    the geodesic segments from $z$ to $\ell$ according to length. If $\Gamma$ has no small eigenvalues,
    then Huber's main result in~\cite{huber1998} says that
    \begin{equation}\label{eq:huber}
        N_{z}(T)=\frac{2}{\vol(\Gamma\backslash\uph)}\frac{\mu}{\nu}X+O(X^{3/4}),
    \end{equation}
    where $\mu$ is the length of the invariant geodesic corresponding to $\mathfrak{H}$, and $X=\frac{\sinh T/2}{\sinh \mu/2}$.
    Independently,~\textcite{good1983} proved
    a stronger error bound of $O(X^{2/3})$. Good's methods also extend to more general counting problems for cofinite groups $\Gamma$.
    There is another interesting geometric interpretation that Huber gave for the counting problem in conjugacy classes.
    After conjugation we may assume that the geodesic $\ell$
    lies on the imaginary axis. Then the counting in $N_{z}(T)$ is equivalent to counting $\gamma z$ in the cosets $\gamma\in\Gamma/\langle g\rangle$, such that
    $\gamma z$ lies inside the sector formed by the imaginary axis and some angle $\Theta$.
    Chatzakos and Petridis~\cite{chatzakos2015} showed that it is possible to apply Chamizo's methods to obtain results analogous to~\eqref{eq:chamizoresult}
    and~\eqref{eq:chamizoresult2} for both cocompact and cofinite $\Gamma$. This was done by extending the method of Huber. Along the way they also
    obtain a new proof of Good's error term $O(X^{2/3})$.

    In $n$ dimensions, \textcite{herrmann1962} investigated the number of geodesic segments from a point to any Jordan measurable subset $Y$ of a totally
    geodesic submanifold $\mathcal{Y}\subset \hyps$.
    Let $N(r,Y,\Gamma p)$ be the number of orthogonal geodesic segments from $\gamma p$, for any $\gamma\in\Gamma$, to $Y$ with length at most $r$.
    For cocompact $\Gamma$, Herrmann proves that
    \begin{equation}\label{eq:herrmann}
        N(r, Y, \Gamma p)\sim \frac{2}{n-1}\frac{\pi^{(n-k)/2}}{\Gamma(\frac{n-k}{2})}\frac{\vol(Y)}{\vol(\Gamma\backslash\hyps)}\cosh^{n-1}r.
    \end{equation}
    His method is geometric, not depending on the action of the group $\Gamma$ on $\hyps$. He introduces an associated Dirichlet series and
    studies its analytic continuation. It is difficult to prove strong error terms with this method.
    We are interested in the error term of~\eqref{eq:herrmann} for $n=3$ and $k=n-1$. We study this for $Y=\mathcal{Y}$ by adapting the method
    of Huber and Chatzakos--Petridis
    to $\hyps=\uphs$ for cocompact $\Gamma\subset\pslc$.

    For the rest of this paper we fix $\hyps=\uphs$.
    We also focus solely on cocompact $\Gamma$, see Remark~\ref{rem:cocompact} for a discussion on cofinite groups.
    Let $\{u_{j}\}_{j\geq 0}$ be a complete orthonormal system of eigenfunctions of the Laplacian $\Delta$
    on $M$ with eigenvalues $\lambda_{j}=s_{j}(2-s_{j})\geq 0$.
    Let $\hypp$ be a totally geodesic hyperplane in $\hyps$ and define $v(p)=\arctan(x_{2}(p)/y(p))$.
    We prove the following theorem.
    \begin{thm}\label{thm1}
        Let $\Gamma$ be a cocompact discrete subgroup of $\pslc$. Set $H=\Gamma\cap\stab_{\pslc}(\hypp)$ and let
        $\period_{j}$ be the period integral of $u_{j}$ over the fundamental domain of $H$ restricted to $\hypp$.
        Define
        \begin{equation*}
            N(p,X)=\#\Set{\gamma\in\cosets\given(\cos v(\gamma p))^{-1}\leq X}.
        \end{equation*}
        Then
        \[N(p,X) = M(p,X) + \error,\]
        where
        \begin{equation}\label{eq:mainterm}
            M(p,X)= \frac{\vol(H\backslash\hypp)}{\vol(\hypmodg)}X^{2}+\sum_{1<s_{j}<2}\frac{2^{s_{j}-1}}{s_{j}}\period_{j}u_{j}(p)X^{s_{j}},
        \end{equation}
        \[\error = O(X^{3/2}).\]
    \end{thm}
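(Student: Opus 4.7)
The plan is to adapt the method of Huber and Chatzakos--Petridis to the three-dimensional setting. The starting point is the geometric identity $(\cos v(p))^{-1} = \cosh d(p,\hypp)$, where $d(\,\cdot\,,\hypp)$ denotes hyperbolic distance to the totally geodesic plane $\hypp$; thus $N(p,X)$ counts $\gamma \in \cosets$ with $\cosh d(\gamma p,\hypp) \leq X$, i.e.\ orbit points in a hyperbolic tube around $\hypp$ taken modulo the stabilizer $H$. I would introduce smooth test functions $k_X^{\pm}\colon[1,\infty)\to\mathbb{R}$ bracketing $\mathbf{1}_{[1,X]}$ on intervals of width $\delta$, and form the $\Gamma$-invariant sums
\[
K^{\pm}(p) \;=\; \sum_{\gamma \in \cosets} k_X^{\pm}\bigl(\cosh d(\gamma p,\hypp)\bigr),
\]
which sandwich $N(p,X)$ up to an error of order $X\delta$ coming from the volume of the fattened tube.

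The central step is a relative pre-trace formula
\[
K^{\pm}(p) \;=\; \sum_{j\geq 0}\widehat{k}_X^{\pm}(s_j)\,\period_j\,u_j(p),
\]
derived by computing the inner product $\langle K^{\pm},u_j\rangle_{\hypmodg}$, unfolding to a fundamental domain for $H$ on $\hyps$, and separating variables in Fermi coordinates $(x,t)\in\hypp\times\mathbb{R}$ about $\hypp$, in which the hyperbolic metric takes the form $dt^2+\cosh^2(t)\,d\sigma^2$ and the volume element becomes $\cosh^2(t)\,dA\,dt$. The horizontal integration extracts the period integral $\period_j$, while the transverse integration yields the transform $\widehat{k}_X^{\pm}(s_j)$, expressible via hypergeometric or Legendre functions arising from the transverse ODE. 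Isolating the contribution of $s_0=2$ produces the main term $\vol(H\backslash\hypp)/\vol(\hypmodg)\cdot X^2$, and the finitely many exceptional eigenvalues $1<s_j<2$ give the secondary terms displayed in~\eqref{eq:mainterm} by direct evaluation of $\widehat{k}_X^{\pm}(s_j)$ on the indicator.

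The remaining error comes from the critical-line spectrum $s_j=1+ir_j$. Combining uniform asymptotics for $\widehat{k}_X^{\pm}(1+ir)$ in both $X$ and $r$ with Cauchy--Schwarz, a mean-square estimate for the periods $\period_j$, the three-dimensional local Weyl law $\sum_{|r_j-R|\leq 1}|u_j(p)|^2\ll R^2$, and the spectral density $\sum_{|r_j-R|\leq 1}1\ll R^2$, I would truncate the spectral sum at some $R$, bound the rapidly decaying tail, and optimize $R$ against $\delta$ to arrive at the pointwise error $O(X^{3/2})$. The main obstacle is the uniform analysis of $\widehat{k}_X^{\pm}(1+ir)$: in three dimensions the transverse Jacobian $\cosh^2(t)$ shifts the indices in the resulting hypergeometric integrals relative to the two-dimensional case, and it is the precise form of this transform, balanced against the $R^2$ growth in the local Weyl law, that pins the pointwise error exponent at $3/2$ rather than the $3/4$ appearing in the two-dimensional analogue.
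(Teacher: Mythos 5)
Your architecture matches the paper's: sandwich the indicator by smoothed test functions, prove a relative pre-trace formula whose coefficients factor as (period integral of $u_{j}$ over $H\backslash\hypp$) $\times$ (transverse integral transform), isolate $s_{0}=2$ and the exceptional $s_{j}$ for the main term, and control the tempered spectrum by dyadic decomposition, Cauchy--Schwarz, the local Weyl law and an optimization in the smoothing parameter. Your Fermi coordinates $dt^{2}+\cosh^{2}t\,d\sigma^{2}$ are exactly the paper's $(x,u,v)$ coordinates after the substitution $\tan v=\sinh t$, and the transverse ODE you allude to is the paper's equation for $\xi_{\lambda}$. (One pleasant simplification you miss: in three dimensions the transverse eigenfunction is elementary, $\xi(t)=\cosh((s_{j}-1)t)/\cosh t$, not genuinely hypergeometric; the paper exploits this to get clean bounds $\min(\abs{t}^{-1},\abs{t}^{-3}\delta^{-2})$ on the transform.)

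Two points, however, need repair. First, your smoothing error is wrong: the count in the tube of radius $\arccosh X$ grows like $X^{2}$, so fattening the tube by $\delta$ in the transverse arclength variable perturbs the count by $O(X^{2}\delta)$, not $O(X\delta)$. This matters because the optimization that produces $3/2$ is the balance $X^{2}\delta \asymp X\delta^{-1}$, giving $\delta=X^{-1/2}$; with your stated $O(X\delta)$ the balance would degenerate and would not yield $X^{3/2}$. Second, you leave unspecified the strength of the ``mean-square estimate for the periods,'' and this is precisely what pins the exponent. The paper proves the Hecke-type bound $\sum_{\abs{t_{j}}\leq T}\abs{\period_{j}}^{2}\ll T$ (better than what the local Weyl law suggests), via a positivity argument: $\int_{\hypmodg}A(f)^{2}\,d\mu\leq K\int_{\hypmodg}A(f)\,d\mu$ with $K=\sup_{p}N(p,X)$, combined with Parseval and a lower bound on the transverse transform for small eigenvalues. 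With this, Cauchy--Schwarz over a dyadic block gives $(2^{n/2})(2^{3n/2})=2^{2n}$ and the spectral error sums to $X\delta^{-1}$; with only a Weyl-law-type bound $\sum\abs{\period_{j}}^{2}\ll T^{2}$ you would get $X\delta^{-3/2}$ and an error exponent of $8/5$ rather than $3/2$. So this lemma is not an optional ingredient but the load-bearing one, and your sketch gives no indication of how to prove it.
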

    Here we understand $\vol(H\backslash\hypp)$ as the hyperbolic area in two dimensions.
    We also apply Chamizo's large sieve results in this case.
    For the radial average, Chamizo only provides a large sieve inequality in two dimensions.
    We generalise it to three dimensions and prove an improvement of $1/6$ on the pointwise
    error term on average.
    In~\cite[Appendix~B]{laaksonen2015} we show that the radial large sieve yields the same improvement for
    the error term in the standard hyperbolic lattice point problem in three dimensions.
    Due to structural reasons the improvements from the large sieve get worse for higher dimensions.
    We expect that in $n$ dimensions it is possible to obtain the conjectured bound for the second moment in the
    spatial aspect while for the radial aspect we get diminishing returns even for the mean square.
    We summarise our results in the following theorems. The corresponding discrete
    averages and more precise statements are given in Theorems~\ref{thm:two}~and~\ref{thm:three}.
    \begin{thm}
        Let $\Gamma$ be a cocompact discrete subgroup of $\pslc$. Then, for $X>2$,
        \[\frac{1}{X}\int_{X}^{2X}\abs{\error[x][p]}^{2}\d{x}\ll X^{2+2/3}\log X.\]
    \end{thm}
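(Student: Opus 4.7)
The plan is to follow Chamizo's large-sieve strategy~\cite{chamizo21996}, as adapted to sector counting by Chatzakos--Petridis~\cite{chatzakos2015}, now in three dimensions. I would first replace the sharp counting function $N(p,x)$ by a smoothed version $N_{h}(p,x)$, where $h$ is a bump supported in a radial window of width $\Delta$; the discrepancy $|N - N_{h}|$ is controlled at each $x$ by the pointwise bound $O(x^{3/2})$ of Theorem~\ref{thm1} applied to an annulus of thickness $\Delta$. After subtracting the main term, what remains is an oscillatory spectral sum of the shape
\[\sum_{r_j}\period_{j}\,u_j(p)\,\gamma(r_j)\,\widehat{h}(r_j)\,x^{1+ir_j},\]
where $\lambda_j = 1 + r_j^{2}$, the gamma-factor satisfies $\gamma(r_j)\asymp r_j^{-3/2}$, and $\widehat{h}(r_j)$ decays rapidly once $r_j$ exceeds the effective cutoff $T\asymp X/\Delta$.

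The central ingredient is a three-dimensional radial large-sieve inequality of the shape
\[\int_{X}^{2X}\abs[\bigg]{\sum_{r_j\leq T}a_j\,x^{ir_j}}^{2}\frac{\d{x}}{x}\ll\bigl(\log X + T^{2}\bigr)\sum_{r_j\leq T}|a_j|^{2},\]
which I would establish by the substitution $x=e^{u}$ to reduce to the classical additive large sieve on an interval of length $\log 2$, combined with the three-dimensional local Weyl law $\#\{j : r_j\in[R,R+1]\}\ll R^{2}$ to bound the density of frequencies in unit intervals. Together with the pre-trace estimate $\sum_{r_j\leq T}|\period_{j}u_j(p)|^{2}\ll T^{2}$ valid for cocompact $\Gamma\subset\pslc$, and with the $r_j^{-3}$ weights produced by $|\gamma(r_j)|^{2}$ after a dyadic decomposition, this yields an estimate of order $X^{2}T^{2}\log X$ for the main contribution to $\int_{X}^{2X}|\error[x][p]|^{2}\d{x}$.

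Balancing this large-sieve term against both the smoothing error and the rapidly decaying tail from $r_j>T$ leads to the optimal choice $T\asymp X^{1/3}$, $\Delta\asymp X^{2/3}$, and produces the claimed $X^{8/3}\log X$. The hard part will be the radial large sieve itself: Chamizo's two-dimensional argument exploits a spectral density linear in $r_j$, while on $\uphs$ this density grows quadratically. It is this quadratic density that both produces the weaker $T^{2}$ term in the large sieve and caps the gain at $X^{1/6}$ relative to the pointwise bound of Theorem~\ref{thm1}, matching the structural obstruction emphasised in the introduction.
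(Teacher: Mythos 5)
Your overall architecture (smooth at a scale $\delta$, expand spectrally, apply a radial large sieve over dyadic blocks, feed in the period bound, optimise) matches the paper's, and you correctly identify the quadratic spectral density in three dimensions as the source of the $T^{2}$ term and of the $1/6$ cap. However, two of your intermediate claims are genuine gaps which, as stated, would prevent the method from beating the pointwise bound. First, the smoothing discrepancy cannot be controlled by applying the pointwise bound of Theorem~\ref{thm1} to an annulus: that count is $O(x\Delta)+O(x^{3/2})$, and the $O(x^{3/2})$ floor alone makes your bound for $\frac{1}{X}\int_{X}^{2X}\abs{N(p,x)-N_{h}(p,x)}^{2}\d{x}$ no better than $X^{3}$, which already exceeds the target $X^{2+2/3}$. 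The paper avoids this by never taking absolute values of the discrepancy: one uses the monotone sandwich $A(f^{-})\leq N\leq A(f^{+})$ as in~\eqref{eq:newerror}, so the only smoothing cost is the main-term increment $O(X^{2}\delta)$, and the pointwise error bound never enters.

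Second, and more seriously, your large sieve strips the eigenfunctions out of the bilinear form, so the coefficient norm must absorb the products $\period_{j}u_{j}(p)$, and you assert $\sum_{r_{j}\leq T}\abs{\period_{j}u_{j}(p)}^{2}\ll T^{2}$. This bound is not available: from Lemma~\ref{lem:period} ($\sum\abs{\period_{j}}^{2}\ll T$) and the local Weyl law of Lemma~\ref{lem:cusp}, the honest estimate (via sup-norm or Cauchy--Schwarz) is at best $T^{3}$, and with $T^{3}$ in place of $T^{2}$ the dyadic blocks grow like $X^{2}(T')^{3}$ and the optimisation returns an exponent worse than the trivial mean square $X^{3}$. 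The paper's Theorem~\ref{thm:chamizo2} is built precisely to sidestep this: the $u_{j}(p)$ remain inside the sieve, the dual quantity $\sum_{j}e^{-t_{j}^{2}/(4T^{2})}\cos(r_{kl}t_{j})\abs{u_{j}(p)}^{2}$ is evaluated through the pre-trace formula with the inverse Selberg transform of a Gaussian (Lemma~\ref{lem:selberg}), giving $\min(T^{3},r_{kl}^{-3})$, and only $\sum\abs{\period_{j}}^{2}\ll T$ enters $\norm{a}_{\ast}^{2}$. You would need either to prove your mixed second-moment bound, which does not follow from the available inputs, or to adopt a hybrid sieve of the paper's form. A smaller inaccuracy: the spectral coefficients here decay like $\abs{t}^{-1}$ (Proposition~\ref{prop:estimates}), not like $r_{j}^{-3/2}$, which is the two-dimensional ball-problem exponent; and the paper derives the integral statement as a limit of a discrete well-spaced average (Proposition~\ref{prop:one}) rather than directly.
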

    \begin{thm}
        Let $\Gamma$ be a cocompact discrete subgroup of $\pslc$. Then, for $X>2$,
        \[\int_{\hypmodg}\abs{\error[X][p]}^{2}\d{\mu(p)}\ll X^{2}\log^{2}X.\]
    \end{thm}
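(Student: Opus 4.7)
The plan is to exploit the spectral decomposition of $N(p,X)$ together with orthonormality of the $u_{j}$ on $M$, and to control the remainder via a large sieve estimate for the period integrals $\period_{j}$. The argument parallels the $L^{2}$ analysis of \textcite{chamizo21996} in two dimensions, adapted to the sector geometry studied in this paper.

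First, following the smoothing scheme used in the proof of Theorem~\ref{thm1}, I would introduce smooth majorants and minorants $N^{\pm}_{Y}(p,X)$ of the indicator of the sector, depending on a smoothing parameter $Y>0$. After subtracting the small-eigenvalue main term, the smoothed counting function admits a spectral expansion
\[
N^{\pm}_{Y}(p,X)-M(p,X)=\sum_{j}c^{\pm}_{j}(X,Y)\,\period_{j}\,u_{j}(p)+\text{(controlled remainder)},
\]
where the Bessel/hypergeometric transform of the test function yields coefficients $c^{\pm}_{j}(X,Y)$ of size $|c^{\pm}_{j}(X,Y)|\ll X/t_{j}^{3/2}$, decaying rapidly once $t_{j}\gg Y$.

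Second, by Parseval on $\hypmodg$, the orthonormality of the $u_{j}$ yields
\[
\int_{\hypmodg}\Bigl|\sum_{j}c^{\pm}_{j}(X,Y)\,\period_{j}\,u_{j}(p)\Bigr|^{2}\d{\mu(p)}=\sum_{j}\abs{c^{\pm}_{j}(X,Y)}^{2}\abs{\period_{j}}^{2}.
\]
I would split the right-hand side dyadically in $t_{j}\sim T$ and apply the spectral large sieve for period integrals in the form $\sum_{t_{j}\leq T}\abs{\period_{j}}^{2}\ll T^{2}$, which is the 3-dimensional analogue of the bound used in~\cite{chamizo21996,chatzakos2015}. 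Each dyadic block then contributes at most $O(X^{2}/T)$, and summing over $T\leq Y$ produces $O(X^{2}\log Y)$.

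Third, the smoothing error $N(p,X)-N^{\pm}_{Y}(p,X)$ is bounded pointwise by the number of lattice points in a thin shell of width $\asymp X/Y$ near the boundary of the sector; by Theorem~\ref{thm1} this count is $O(X^{2}/Y+X^{3/2})$. Squaring, integrating over $\hypmodg$, and applying Cauchy--Schwarz gives an admissible contribution, and balancing the two estimates with $Y\asymp X$ yields the announced bound $\ll X^{2}\log^{2}X$. The main obstacle is the underlying large sieve for $\period_{j}$: Chamizo's 2D duality and test-function construction must be re-derived so that each step respects the correct Weyl density $T^{2}$ on $\hypmodg$ and the appropriate kernel estimate on $\hypp$, without incurring losses from the transition between the ambient 3D spectrum and the 2D period; this verification is the technical heart of the proof and is what prevents the estimate from degrading for higher-dimensional generalisations.
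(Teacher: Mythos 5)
Your route is genuinely different from the paper's. The paper first proves a discrete spatial average over $\epsilon$-separated points $p_{1},\dots,p_{R}$ using Chamizo's large sieve inequality (Theorem~\ref{thm:chamizo1}, which bounds $\sum_{k}\abs{\sum_{\abs{t_{j}}\le T}a_{j}u_{j}(p_{k})}^{2}$ by $(T^{3}+\epsilon^{-3})\norm{a}_{\ast}^{2}$), and only then recovers the integral by letting $R\to\infty$ over uniformly spaced balls. You instead apply Parseval directly to the integral; this is legitimate since $M$ is compact and $A(f^{\pm})\in L^{2}(M)$, and it is arguably the more natural argument for the continuous average (the paper's first Remark acknowledges direct integration in the spectral expansion as an alternative in the radial aspect). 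Two corrections to your inputs, neither fatal: Proposition~\ref{prop:estimates} gives $c(f^{\pm},t)\ll X\min(\abs{t}^{-1},\abs{t}^{-3}\delta^{-2})$ rather than $X\abs{t}^{-3/2}$, and Lemma~\ref{lem:period} gives the Hecke-type bound $\sum_{\abs{t_{j}}\le T}\abs{\period_{j}}^{2}\ll T$, which is stronger than your $T^{2}$; with either pair of bounds the dyadic sum $\sum_{j}\abs{c(f^{\pm},t_{j})}^{2}\abs{\period_{j}}^{2}$ closes to $O(X^{2}\log X)$ or better. Note also that on this Parseval route no large sieve for the $\period_{j}$ is needed at all: the ``technical heart'' you identify is only required for the discrete average, and what you actually need is the mean-square period bound, which the paper proves by an elementary Hecke-type argument rather than by Chamizo's duality.

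The genuine gap is in your third step. You bound the unsmoothing error $N(p,X)-N_{Y}^{\pm}(p,X)$ pointwise by the shell count $O(X^{2}/Y+X^{3/2})$ via Theorem~\ref{thm1}, then square and integrate. The term $X^{3/2}$ squares to $X^{3}$, which is not $\ll X^{2}\log^{2}X$; with $Y\asymp X$ your smoothing contribution is $O(X^{3})$ and the proof does not close. The pointwise error bound must not enter here. Instead one sandwiches $A(f^{-})(p)\le N(p,X)\le A(f^{+})(p)$ and writes $N(p,X)-M(p,X)=\eerror[f^{\pm}]+O(X^{2}\delta+X)$, where the $O$-term is only the \emph{main} term of the shell (as in \eqref{eq:newerror}), while the shell's own spectral fluctuation is absorbed into $\eerror[f^{\pm}]$ and controlled in $L^{2}$ by exactly the Parseval computation of your second step. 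The only squared loss is then $\vol(M)\,(X^{2}\delta)^{2}$, and choosing $\delta\asymp X^{-1}$ gives the stated bound (in fact $O(X^{2})$ by this route). In short: the boundary shell must be handled in $L^{2}$ through the spectral expansion, not through the pointwise estimate of Theorem~\ref{thm1}.
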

    \begin{remark}
        It is also possible to obtain the radial mean square~\eqref{eq:chamizoresult} in the standard two dimensional lattice point problem
        by direct integration in the spectral expansion of the error term.
        This is done for a smoothed error term by~\textcite{phillips1994}. It is possible to deduce
        the result of Chamizo from their computations~\cite{cherubini2016}.
        It would be interesting to see if this can be done in our problem and whether it improves on the above estimate coming from the large sieve.
    \end{remark}
    \begin{remark}
        The other possible case in three dimensions, $k=1$, is substantially harder with our method. Currently, the spectral
        expansion of the automorphic function corresponding to $N(p,X)$ can be written in terms of $\period_{j}$ and
        an explicit solution to an ordinary differential equation. For $k=1$ we can no longer solve the corresponding
        eigenvalue equation as it remains a partial differential equation.
        Geometrically, $k=1$ corresponds to counting in a cone, while $k=2$ is counting in a sector.
    \end{remark}
    \begin{remark}
        The majority of computations in this paper are more explicit than in two dimensions (cf.~\cite{chatzakos2015}).
        This is because the Selberg transform, the spherical eigenfunctions and the special functions in the spectral expansion
        of the counting function~\eqref{eq:xiw} can all be expressed in an elementary form.
        We expect that there is always such a distinction between even and odd dimensions.
    \end{remark}
    \begin{remark}
        Dynamical systems and ergodic methods have also been applied to study lattice point counting. Their advantage
        is that the results generally apply to a larger set of manifolds. On the other hand, these methods fail to produce
        finer results, such as strong error terms.
        For example, \textcite{parkkonen2015}
        extend the counting in conjugacy classes problem of Huber to higher dimensions for loxodromic, parabolic and elliptic
        conjugacy classes for any discrete group of isometries $\Gamma$. See also~\cite{parkkonen2013} for a survey on a wider
        variety of counting problems analogous to \textcite{herrmann1962}.
        Moreover, \textcite{eskin1993} obtain main terms for a variety of counting problems on affine symmetric spaces
        defined by Lie groups. In particular, they give an alternate proof for the main term on homogenous affine varieties, which was also
        proved by \textcite{duke1993} through spectral methods.
    \end{remark}

    \section{The Geometric Setup of the Problem}
    We refer to~\cite{elstrodt1998} for the basic definitions concerning $\hyps$.
    We denote points $p\in\hyps$ by $p=z+yj=(x_{1},x_{2},y)$, where $z=x_{1}+ix_{2}\in\complex$ and $y>0$.
    The action of $\gamma=\begin{psmallmatrix}a & b\\ c&d\end{psmallmatrix}\in\Gamma$ on $\hyps$ is given by
    \[\gamma p=\left(\frac{(az+b)\overline{(cz+d)}+a\overline{c}y^{2}}{\norm{cp+d}^{2}},\frac{y}{\norm{cp+d}^{2}}\right),\]
    where $\norm{p}^{2}=\abs{z}^{2}+y^{2}$ is the Euclidean norm of $p$. Equipped with the hyperbolic metric, the hyperbolic distance between
    $p,\,p'\in\hyps$ is given by
    \[\cosh d(p,p')=\delta(p,p'),\]
    where $\delta$ is the standard point-pair invariant.
    This gives rise to the volume element
    \begin{equation}\label{eq:volume}
        d\mu(p)=\frac{dx_{1}\,dx_{2}\,dy}{y^{3}},
    \end{equation}
    and the hyperbolic Laplacian
    \[\Delta=y^{2}\left(\pd[2]{x_{1}}+\pd[2]{x_{2}}+\pd[2]{y}\right)-y\pd{y}.\]
    The totally geodesic hypersurfaces in $\hyps$ are the Euclidean hyperplanes and semispheres orthogonal to the complex plane.
    Motivated by Huber, we define a new set of coordinates. Let
    \begin{align*}
        x &= x_{1}, &
        u &= \log \sqrt{x_{2}^{2}+y^{2}}, &
        v &= \arctan\frac{x_{2}}{y},
    \end{align*}
    and transform to $p=(x(p), u(p), v(p))$. We often write $(x, u, v)$ for the same point as a shorthand if
    the point in question is clear. The effect of this change of coordinates on the metric and the Laplacian
    is summarised in the next lemma.
    \begin{lemma}\label{lem:coords}
        With the $(x,u,v)$ coordinates defined as above, we have
        \begin{align*}
            ds^{2} &= \frac{dx^{2}}{e^{2u}\cos^{2}v}+\frac{du^{2}+dv^{2}}{\cos^{2}v},\\
            d\mu(p) &= \frac{dx\,du\,dv}{e^{u}\cos^{3}v},
        \end{align*}
        and
        \[\Delta = e^{2u}\cos^{2}v\pd[2]{x} + \cos^{2}v\left(\pd[2]{u}+\pd[2]{v}\right)-\cos^{2}v\pd{u}+\sin v\cos v\pd{v}.\]
    \end{lemma}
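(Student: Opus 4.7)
The plan is to use the change of variables in its explicit inverse form and then apply standard differential-geometric formulas. Inverting the definitions of $u$ and $v$, one sees that
\[
x_1 = x,\qquad x_2 = e^{u}\sin v,\qquad y = e^{u}\cos v,
\]
since $x_2^2+y^2 = e^{2u}(\sin^2 v+\cos^2 v) = e^{2u}$ and $x_2/y = \tan v$. This gives a smooth bijection onto the strip $v\in(-\pi/2,\pi/2)$, after which everything becomes a direct calculation.

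First I would compute $ds^2$ by differentiating the inverse map. Since $dx_2 = e^u\sin v\,du + e^u\cos v\,dv$ and $dy = e^u\cos v\,du - e^u\sin v\,dv$, the cross terms cancel and one obtains $dx_2^2+dy^2 = e^{2u}(du^2+dv^2)$, hence $dx_1^2+dx_2^2+dy^2 = dx^2+e^{2u}(du^2+dv^2)$. Dividing by $y^2 = e^{2u}\cos^2 v$ gives the stated expression for $ds^2$. From this the diagonal metric coefficients are $g_{xx}=e^{-2u}\sec^2 v$ and $g_{uu}=g_{vv}=\sec^2 v$, so
\[
\sqrt{\lvert g\rvert} \;=\; \frac{1}{e^{u}\cos^{3}v},
\]
which is precisely the claimed $d\mu$; equivalently the Jacobian of $(x,u,v)\mapsto(x_1,x_2,y)$ equals $e^{2u}$, and dividing by $y^3 = e^{3u}\cos^3 v$ recovers the formula.

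For the Laplacian I would apply the Laplace--Beltrami formula $\Delta f = \lvert g\rvert^{-1/2}\partial_i(\lvert g\rvert^{1/2}g^{ij}\partial_j f)$ in the diagonal metric. The three coefficients to differentiate are
\[
\sqrt{\lvert g\rvert}\,g^{xx} = \frac{e^{u}}{\cos v},\qquad \sqrt{\lvert g\rvert}\,g^{uu} = \sqrt{\lvert g\rvert}\,g^{vv} = \frac{1}{e^{u}\cos v}.
\]
The first is independent of $x$, so it contributes $e^{2u}\cos^2 v\,\partial_x^2 f$ after multiplication by $\lvert g\rvert^{-1/2}=e^u\cos^3 v$. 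The $u$-term gives $\cos^2 v(\partial_u^2 f-\partial_u f)$ after $\partial_u(e^{-u})=-e^{-u}$, and the $v$-term gives $\cos^2 v\,\partial_v^2 f+\sin v\cos v\,\partial_v f$ after $\partial_v(\sec v)=\sec v\tan v$. Adding these yields the formula in the lemma.

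The computation presents no real obstacle beyond bookkeeping; the only step worth double-checking is the sign from $\partial_v(\cos v)^{-1}=\sin v/\cos^2 v$, which produces the single first-order $v$-derivative term $\sin v\cos v\,\partial_v$.
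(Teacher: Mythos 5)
Your proof is correct and follows essentially the same route as the paper: both compute the differential of the inverse map $(x,u,v)\mapsto(x_1,x_2,y)$ to obtain the metric tensor $(g_{ij})=\cos^{-2}v\,\mathrm{diag}(e^{-2u},1,1)$, read off $d\mu$ from $\sqrt{\lvert g\rvert}$, and then apply the Laplace--Beltrami formula with the coefficients $e^{u}/\cos v$ and $1/(e^{u}\cos v)$. All of your intermediate computations check out.
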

    \begin{proof}
        The Jacobian of the transformation $(x_{1},x_{2},y)\mapsto(x,u,v)$ is
        \begin{equation*}
            \begin{pmatrix} 1 &  & \\
                 & \sin v\,e^{u} & \cos v\,e^{u}\\
                 & \cos v\,e^{u} & -\sin v\,e^{u}
            \end{pmatrix},
        \end{equation*}
        so that the hyperbolic metric tensor in these coordinates is
        \begin{equation}\label{eq:tensor}
            (g_{ij})=\frac{1}{\cos^{2}v}\begin{psmallmatrix} e^{-2u}\\
                & 1\\
                & & 1
            \end{psmallmatrix}.
        \end{equation}
        For the Laplacian we get
        \begin{align*}
            \Delta &= e^{u}\cos^{3}v\left(\pd{x}\left(\frac{e^{u}}{\cos v}\pd{x}\right)+\pd{u}\left(\frac{1}{e^{u}\cos v}\pd{u}\right)+\pd{v}\left(\frac{1}{e^{u}\cos v}\pd{v}\right)\right),
        \end{align*}
        which simplifies to the required form.
    \end{proof}
    \begin{figure}
        \centering
        \begin{tikzpicture}
            \draw [->, thick] (0, 0) -- (0, 4) node (taxis) [above] {$y$};
            \draw [->, thick] (30:-2) -- (30:3) node (xaxis) [above] {$x_{1}$};
            \draw [->, thick] (-2, 0) -- (3, 0) node (yaxis) [right] {$x_{2}$};

            \draw (30:-1.5) coordinate (p1) -- ++(0, 3) coordinate (p2) -- ++(30:3) coordinate (p3) -- ++(0, -3) coordinate (p4);
            \fill [opacity=0.4, gray] (p1) -- (p2) -- (p3) -- (p4) -- cycle;

            \draw [dashed, thin] (30:-1.5) -- ++(20:2) -- ++(30:3) -- ++(20:-2);

            \draw [->, thick] (0,0) -- (20:1.75) node (uaxis) [right] {$e^{u}$};
            \draw (0,0) ++(90:0.5) arc (90:20:0.5) node[midway, yshift=0.7em,xshift=0.2em] {$v$};
        \end{tikzpicture}
        \caption{\small The $(x,u,v)$ coordinates in $\hyps$.}
        \label{fig:coords}
    \end{figure}

    Now, let $\hypp$ be a totally geodesic hyperplane in $\hyps$. After conjugation by an element of $\pslc$,
    we may assume that $\hypp$ is given by the set $\Set{p\in\hyps\given v=0}$ (i.e.~$x_{2}=0$).
    Notice that in this case $v(p)$ measures the angle between $p$ and $\hypp$ as shown in Figure~\ref{fig:coords}.
    Let $p\in\hyps$. We denote the orthogonal projection (along geodesics) of $p$ onto $\hypp$ by $\proj{p}=(x(p),u(p),0)$.
    Next we identify all the elements of $\pslc$ that stabilise the plane $\hypp$. Since we are no longer working
    with a single geodesic, the stabiliser will be larger than in the two dimensional setting.
    \begin{lemma}
        The stabiliser of $\hypp\subset\pslc$ is
        \[\stab_{\pslc}(\hypp)=\pslr\bigcup\left(\begin{smallmatrix}i &\\ &-i\end{smallmatrix}\right)\pslr.\]
    \end{lemma}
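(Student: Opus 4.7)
The plan is to reduce the stabiliser condition on the hyperplane $\hypp$ to a condition on the induced M\"obius action on the boundary $\projc=\partial\uphs$, and then to solve that condition by a direct matrix calculation. A totally geodesic hyperplane in $\uphs$ is uniquely determined by its boundary circle in $\projc$, and the extension of each $g\in\pslc$ to $\projc$ is given by the M\"obius transformation $z\mapsto(az+b)/(cz+d)$. Hence $g$ stabilises $\hypp$ if and only if this M\"obius transformation stabilises $\partial\hypp=\reals\cup\{\infty\}$.

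Next I would translate that condition into a matrix identity. Preservation of $\reals\cup\{\infty\}$ is equivalent to $(az+b)/(cz+d)$ being real for every real $z$, i.e.\ the M\"obius map equalling its entrywise conjugate; since two $\slc$-matrices define the same M\"obius transformation iff they differ by a scalar, this becomes $\overline{g}=\lambda g$ for some $\lambda\in\complex^{*}$. Taking determinants and using $\det g=1$ forces $\lambda=\pm 1$. If $\lambda=1$ then all entries of $g$ are real, so $g\in\pslr$. If $\lambda=-1$ then every entry of $g$ is purely imaginary, and writing $g=\left(\begin{smallmatrix}i\alpha & i\beta\\ i\gamma & i\delta\end{smallmatrix}\right)$ with $\alpha,\beta,\gamma,\delta\in\reals$, the condition $\det g=1$ becomes $\alpha\delta-\beta\gamma=-1$; factoring $\left(\begin{smallmatrix}i & 0\\ 0 & -i\end{smallmatrix}\right)$ out on the left then exhibits $g$ as an element of $\left(\begin{smallmatrix}i & 0\\ 0 & -i\end{smallmatrix}\right)\pslr$. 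The reverse inclusions are immediate from the explicit actions: $\pslr$ visibly preserves $\hypp$, and $\left(\begin{smallmatrix}i & 0\\ 0 & -i\end{smallmatrix}\right)$ acts by $z+yj\mapsto -z+yj$ which fixes $\hypp$ as a set.

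The main obstacle is making precise the first step, namely that stabilisers of the hyperplane coincide with stabilisers of its boundary circle; once that identification is in hand, the rest is a two-line matrix calculation. The only technical subtlety is the passage between $\slc$ and $\pslc$, but the $\pm I$ sign ambiguity is absorbed inside each of the two cosets and does not affect the decomposition.
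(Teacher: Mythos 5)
Your proposal is correct, but it takes a genuinely different route from the paper's. The paper never leaves $\uphs$: it writes out $x_{2}(\gamma p)$ from the explicit formula for the action, demands that it vanish identically in $x_{1}$ and $y$, extracts the three conditions $\im(a\overline{c})=\im(a\overline{d}+b\overline{c})=\im(b\overline{d})=0$, and solves these by putting the entries in polar form, which forces all four arguments into $\{0,\pi\}$ or all four into $\{\pm\pi/2\}$. You instead pass to the boundary: $\hypp$ is the unique totally geodesic hyperplane with boundary circle $\ereals\subset\projc$, so stabilising $\hypp$ is equivalent to the induced M\"obius transformation preserving $\ereals$; rigidity of M\"obius maps (agreement on three points plus the scalar ambiguity of matrices) then gives $\overline{g}=\lambda g$, determinants force $\lambda=\pm1$, and the two signs yield real and purely imaginary entries respectively. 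Your bookkeeping in the imaginary case is right: $\det g=1$ gives $\alpha\delta-\beta\gamma=-1$, which is exactly absorbed by factoring out $\left(\begin{smallmatrix}i &\\ &-i\end{smallmatrix}\right)$, and the reverse inclusions are as immediate as you claim. What each approach buys: the paper's computation is self-contained, needing nothing beyond the action formula, at the cost of solving a small trigonometric system; yours is shorter and more conceptual, identifying $\stab_{\pslc}(\hypp)$ as the preimage of $\mathrm{PGL}_{2}(\reals)$ in $\pslc$, but it leans on the correspondence between totally geodesic hyperplanes and their boundary circles, which you rightly flag as the one step requiring a precise justification (a hyperplane is the geodesic hull of its boundary circle, and the boundary extension of an isometry of $\uphs$ is the M\"obius action). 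Neither gap is serious; the argument is sound.
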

    \begin{proof}
        Denote the stabiliser by $A$.
        Let $\gamma=\left(\begin{smallmatrix}a & b\\ c& d\end{smallmatrix}\right)\in A$. By definition
        we have that $\gamma p\in\hypp$ for any $p\in\hypp$, that is
        $x_{2}(p)=0$ implies that $x_{2}(\gamma p)=0$. Hence,
        \begin{equation*}
            x_{2}(\gamma p) = \frac{\im(a\overline{c}x_{1}^{2}+(a\overline{d}+b\overline{c})x_{1}+b\overline{d}+a\overline{c}y^{2})}{\norm{cp+d}^{2}}=0.
        \end{equation*}
        This needs to be true for any $x_{1}$ and $y$. Comparing coefficients we get
        \begin{align}
            \im(a\overline{c}) & = 0,\label{eq:trig1}\\
            \im(a\overline{d}+b\overline{c}) &=0,\label{eq:trig2}\\
            \im(b\overline{d}) &=0.\label{eq:trig3}
        \end{align}
        Now, write $a,b,c,d$ in polar form as
        \[\begin{aligned}
                a&=r_{a}e^{\theta_{a}i}, &   b&=r_{b}e^{\theta_{b}i},\\
                c&=r_{c}e^{\theta_{c}i}, &   d&=r_{d}e^{\theta_{d}i}.
            \end{aligned}\]
        Solving the equations~\eqref{eq:trig1},~\eqref{eq:trig2} and~\eqref{eq:trig3} tells us that
        either $\theta_{a}$, $\theta_{b}$, $\theta_{c}$, $\theta_{d}\in\{0,\pi\}$ or
        $\theta_{a}$, $\theta_{b}$, $\theta_{c}$, $\theta_{d}\in\{\frac{-\pi}{2},\frac{\pi}{2}\}$. In the first case the matrix is real and hence
        gives $\pslr$. In the second case we have, after considering the determinant,
        \[\gamma=\begin{pmatrix}i & \\ & -i\end{pmatrix}\begin{pmatrix}r_{a} & r_{b}\\ r_{c} & r_{d}\end{pmatrix}\quad\text{or}\quad\begin{pmatrix}r_{a} & r_{b}\\ r_{c} & r_{d}\end{pmatrix}\begin{pmatrix} i & \\ & -i\end{pmatrix}.\]
    \end{proof}
    Now define $H=\stab_{\pslc}(\hypp)\cap\Gamma$. We can then write the counting function as
    \[\widetilde{N}(p,\Theta)=\#\Set{\gamma\in\cosets\given\abs{v(\gamma p)}\leq \Theta},\]
    where $\Theta\in(0,\pi/2)$.
    If we set $(\cos \Theta)^{-1}=X$, then $\widetilde{N}$ takes on the following form (cf.~\textcite{huber1998})
    \[\widetilde{N}(p,\Theta)=N(p,X)=\#\Set*{\gamma\in\cosets\given\frac{1}{\cos v(\gamma p)}\leq X}.\]
    \begin{remark}
        Counting in the sector is equivalent to counting orthogonal geodesic segments from $\gamma p$ to $\hypp$ according to length.
        Hence, we can easily relate the main term~\eqref{eq:mainterm} to that of Herrmann's~\eqref{eq:herrmann}. Given a point $p\in\hyps$,
        the projection $p_{0}$ in the $(x_{1},x_{2},y)$-coordinates is given by $p_{0}=(x_{1},0,\sqrt{x_{2}^{2}+y^{2}})$. Then,
        by the explicit formula for the point-pair invariant, we get
        \[\delta(p,p_{0})=\frac{\sqrt{x_{2}^{2}+y^{2}}}{y}=\sec v.\]
        For $N( p, X)$ we have $X=\sec\Theta$ so that the maximal distance we are counting
        is $\arccosh\sec\Theta$. Substituting this into~\eqref{eq:herrmann} with $n=3$ and $k=2$ shows that the main terms agree.
    \end{remark}

    \begin{figure}[b]
        \centering
        \label{fig:fund}




        \begin{tikzpicture}
            \draw [->] (0, 0) -- (0, 4) node (taxis) [above] {$y$};
            \draw [->] (30:-2) -- (30:3.5) node (xaxis) [above] {$x_{1}$};
            \draw [->] (-4, 0) -- (4, 0) node (yaxis) [right] {$x_{2}$};

            \draw (30:-1.5) coordinate (p1) -- ++(0, 3) coordinate (p2) -- ++(30:3) coordinate (p3) -- ++(0, -3) coordinate (p4);

            \draw (90:0.7) coordinate (l2) +(0,2.3) coordinate (l1);
            \draw (30:0.5) ++(0,0.30) coordinate (l3)  +(0,2.7) coordinate (l4);
            \draw [fill opacity=0.5, thick, fill=black] (l1) -- (l2) .. controls ($ (l2) + (30:0.2) $) and ($ (l3) + (120:0.2) $) .. (l3) -- (l4);

            \draw [thick] (0,0) ++(1.0,0) coordinate (m2) -- +(1.7,0) coordinate (m1);
            \draw [thick] (l3) ++(-90:0.3) ++(0.3,0) coordinate (m3) -- +(2.3,0) coordinate (m4);
            \draw [thick] (m2) .. controls ($(m2) +(120:0.2)$) and ($(m3) +(-30:0.2)$) .. (m3);

            \draw [thick] (0,0) ++(-0.7,0) coordinate (n2) -- +(-1.9,0) coordinate (n1);
            \draw [thick] (l3) ++(-90:0.3) ++(-0.3,0) coordinate (n3) -- +(-2.4,0) coordinate (n4);
            \draw [thick] (n2) .. controls ($(n2) +(30:0.2)$) and ($(n3) +(210:0.2)$) .. (n3);

            \draw [densely dashed] (n2) to[in=100,out=90,distance=26] (m2);
            \draw [densely dashed] (n3) to[in=100,out=90,distance=12] (m3);
            \draw [dashed] (n1) to[in=90,out=90,distance=113] (m1);
            \draw [dashed] (n4) to[in=90,out=90,distance=113] (m4);
            \draw [densely dashed] (n1) to (n4);
            \draw [densely dashed] (m1) to (m4);
        \end{tikzpicture}
        \caption{\small Fundamental domain of $H$ in $\hyps$ with the part for $H\backslash\hypp$ highlighted (for $\Gamma=\pslzi$).}
    \end{figure}
    As we are working with invariance under $H$, we have to compute its fundamental domain,
    $\fund[\hyps]{H}$. It has a particularly convenient description in the $(x,u,v)$-coordinates.
    First, consider $H$ restricted to the plane $\hypp$ and denote the fundamental domain of $H$ in this space
    by $S=\fund[\uph]{H}$.
    \begin{lemma}
        We claim that $\fund[\hyps]{H}=F$, where $F$ is given by the union of rotated copies of $S$:
        \[F=\bigcup_{\theta\in(-\frac{\pi}{2},\frac{\pi}{2})}S_{\theta},\]
        and $S_{\theta}$ are defined by
        \[S_{\theta}=\Set{p\in\hyps\given v(p)=\theta, \proj{p}\in S}.\]
    \end{lemma}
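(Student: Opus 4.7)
The plan is to recast the claim as saying that $F=\pi^{-1}(S)$ is a fundamental domain for $H$ on $\hyps$, where $\pi\colon\hyps\to\hypp$ is the orthogonal projection $p\mapsto\proj{p}$. Since $v(p)$ ranges over $(-\pi/2,\pi/2)$ without any further restriction in the definition of $F$, the condition $p\in F$ is equivalent to $\proj{p}\in S$, so $F$ really is this preimage.

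The central geometric ingredient I would establish first is that every $h\in H$ commutes with the projection, namely $(hp)_{0}=h(\proj{p})$. This holds because $h$ is an isometry of $\hyps$ preserving $\hypp$ (by definition of $H$), so it maps the unique geodesic through $p$ orthogonal to $\hypp$ to the unique geodesic through $hp$ orthogonal to $\hypp$, thereby sending feet to feet. For a hands-on verification one can use the explicit formula for the action of $\gamma\in\pslc$ together with the preceding lemma's description of $\stab_{\pslc}(\hypp)$, checking the identity for both $\pslr$ and for the coset $\begin{psmallmatrix}i & \\ & -i\end{psmallmatrix}\pslr$; both cases quickly reduce to $|cp+d|^{2}=|c\proj{p}+d|^{2}$ using $\proj{p}=(x_{1},0,\sqrt{x_{2}^{2}+y^{2}})$.

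Granted this commutation, the two fundamental-domain axioms for $F$ reduce directly to the corresponding ones for $S\subset\hypp$. For the covering property, given any $p\in\hyps$ I pick $h\in H$ with $h\proj{p}\in S$, which exists because $S$ tiles $\hypp$ under $H$; then $(hp)_{0}=h\proj{p}\in S$ and so $hp\in F$. For the disjointness of $H$-translates of the interior, if $p$ and $hp$ both lie in the interior of $F$ with $h\in H$, then $\proj{p}$ and $h\proj{p}$ lie in the interior of $S$, forcing $h$ to act trivially on $\hypp$. To conclude $h=\id$ I appeal again to the preceding lemma: nontrivial elements of $\pslr$ act nontrivially on the upper half-plane, and the representative $\begin{psmallmatrix}i & \\ & -i\end{psmallmatrix}$ sends $(x_{1},0,y)$ to $(-x_{1},0,y)$, so no nonidentity element of $\stab_{\pslc}(\hypp)$ fixes $\hypp$ pointwise.

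The only place where real care is needed is the commutation fact $(hp)_{0}=h\proj{p}$ for elements in the flip coset of $\stab_{\pslc}(\hypp)$, since one must track both the new location of the projection and the sign behaviour $v(hp)=-v(p)$ consistently; this is also what guarantees that both signs of $\theta$ can appear in $F$ without redundancy, because the flip symmetry is already absorbed into the shape of $S$. Once the commutation is in hand, the remainder of the argument is a clean reduction to the two-dimensional statement that $S$ is a fundamental domain for $H$ on $\hypp$.
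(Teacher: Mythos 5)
Your proposal is correct and follows essentially the same route as the paper: both hinge on the commutation $\proj{(\gamma p)}=\gamma\proj{p}$ for $\gamma\in H$ (which the paper verifies by the explicit coordinate computation you also offer as an alternative) and then reduce both fundamental-domain axioms for $F$ to the corresponding ones for $S$ on $\hypp$. Your additional observation that no nonidentity element of $\stab_{\pslc}(\hypp)$ fixes $\hypp$ pointwise is a worthwhile explicit justification of a step the paper leaves implicit.
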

    \begin{proof}
        This follows immediately from computing the action of $H$ on $\hyps$, which is seen to be independent of $v(p)$ in the $x$ and $u$-coordinates.
        First, let $\iota=\begin{psmallmatrix}i & \\ & -i\end{psmallmatrix}$,
        which acts on $p\in\hyps$ as a rotation by $\pi$ about the imaginary axis, $\iota p=\iota(z+yj)=-z+yj$,
        so that
        \begin{align*}
            x(\iota p) &= -x(p),
            & u(\iota p) & = u(p),
            & v(\iota p) & = -v(p).
        \end{align*}
        On the other hand, for $\tau=\left(\begin{smallmatrix}a & b\\ c&d\end{smallmatrix}\right)\in\pslr$, we find that
        \begin{align*}
            x(\tau p)
            &= \frac{ac\norm{p}^{2}+(ad+bc)x+bd}{c^{2}\norm{p}^{2}+2cdx+d^{2}}.
        \end{align*}
        And since $\norm{p}^{2}=x^{2}+e^{2u}$ it follows that $x(\tau p)$ does not depend on $v(p)$.
        Similarly,
        \begin{align*}
            x_{2}(\tau p)& = \frac{x_{2}}{\norm{cp+d}^{2}},
            & y(\tau p)&= \frac{y}{\norm{cp+d}^{2}},\\
            \shortintertext{so that}
            u(\tau p) &= \log\frac{\sqrt{x_{2}^{2}+y^{2}}}{\norm{cp+d}^{2}}  & v(\tau p) &= v(p),\\
            &= u(p)-\log\norm{cp+d}^{2}.\\
        \end{align*}
        It is easy to see that $\iota\tau\iota\in\slr$. Hence any $\gamma\in H$ can be written as $\iota\tau$, $\tau\iota$ or $\tau$.
        The first consequence of the above calculations is that for $p\in\hyps$ and $\gamma\in H$ the action of the group and the
        orthogonal projection to $\hypp$ commute, i.e.~$\proj{(\gamma p)}=\gamma\proj{p}$.
        Moreover, if
        $v(p)=\theta$ then $v(\gamma p)=\pm\theta$ for any $\gamma\in H$. Thus, suppose that
        $p,\gamma p\in S_{\pm\theta}$ for some $\theta\in(0,\pi/2)$ and $\gamma\in H$. It follows that
        $\proj{p}, \gamma\proj{p}\in S$, which is a contradiction as $S$ is a fundamental domain for $H$ on the plane. This shows
        that $\fund[\hyps]{H}\subseteq F$. Suppose, $\fund[\hyps]{H}\varsubsetneq F$. Then for some $\theta$ there is a point $p\in S_{\theta}$
        and $\gamma\in H$ with $\gamma p\in F\setminus\fund[\hyps]{H}$. Projecting back to $S$ gives a contradiction.
    \end{proof}
    \begin{remark}\label{rem:cocompact}
        For cocompact $\Gamma$ it is easy to see that $N(p,X)$ is uniformly bounded in $p$. On the other hand, for cofinite $\Gamma$
        it is not true in general, although it is still possible to see that $N(p,X)$ is well-defined (finite for fixed $X$ and $p$).
        For example, if $\Gamma=\pslzi$ then as $y(p)\tendsto\infty$, $N(p,X)$ becomes unbounded.
        This introduces complications for the convergence of the corresponding automorphic form and is the main
        reason for our restriction to cocompact groups. The problem lies in the fact that for non-compact $M$
        the totally geodesic
        surface can pass through the cusp. It should be possible to overcome this difficulty by restricting
        the group $\Gamma$ appropriately.
    \end{remark}

    \section{Spectral Analysis}\label{sec:spec}

    Let $\Gamma\subset\pslc$ be cocompact.
    Define a function $A(f)$ on $\hyps$ by
    \[A(f)(p) = \sum_{\gamma\in\cosets}f\left(\frac{1}{\cos^{2}v(\gamma p)}\right),\]
    where $\func{f}{[1,\infty)}{\reals}$ has a compact support and finitely many discontinuities.
    Then it is easy to see that $A(f)$ is automorphic, since $v(\gamma p)$ is constant
    on the cosets. Since $A$ is sufficiently smooth (in $C^{2}(M)$), it follows by~\cite[pg.~23]{huber1956} that $A$ converges
    pointwise to its spectral expansion.
    Let $\{u_{j}\}_{j\geq 0}$ be a complete orthonormal system of automorphic eigenfunctions of $-\Delta$ with corresponding eigenvalues $\lambda_{j}$.
    Since our problem differs from the standard lattice point counting problem, in that $A(f)$ does not define an automorphic kernel, we do not
    have the usual expansion in terms of the Selberg transform of $f$. The correct substitute for this is the spectral expansion of $A(f)$ in term
    of the $u_{j}$'s.
    Let $a(f,t_{j})$ be the coefficients of the spectral expansion of $A(f)$ on $\hypmodg$ given by
    \[a(f,t_{j}) = \langle A(f),u_{j}\rangle= \int_{\hypmodg}A(f)(p)\overline{u}_{j}(p)\d{\mu(p)}.\]
    Then the spectral expansion of $A(f)$ in terms of the $u_{j}$'s is
    \[A(f)(p) = \sum_{j} a(f,t_{j})u_{j}(p).\]
    We now compute the coefficients $a(f,t_{j})$ explicitly in the manner of \cite[Lemma~2.3]{huber1998} and \cite[Lemma~2.1]{chatzakos2015}.
    Following this, we identify the special function that appears in the spectral expansion and prove some simple estimates on it.
    For simplicity, consider $u_{j}$ instead of
    $\overline{u}_{j}$.
    \begin{lemma}\label{lem:specexp}
        We have
        \begin{equation*}
            a(f,t_{j}) = 2\period_{j} c(f,t_{j}),
        \end{equation*}
        where
        \[\period_{j}=\int_{H\backslash\hypp}u_{j}(x,u,0)\frac{du\,dx}{e^{u}},\]
        is a period-integral of $u_{j}$ over the fundamental domain of $H$ restricted to the plane $\hypp$.
        Also,
        \[c(f,t_{j})=\int_{0}^{\frac{\pi}{2}}f\left(\frac{1}{\cos^{2}v}\right)\frac{\xi_{\lambda_{j}}(v)}{\cos^{3}v}\d{v},\]
        where $\xi_{\lambda}$ is the solution of the ordinary differential equation
        \[\cos^{2}v\,\xi_{\lambda}''(v)+\sin v\cos v\,\xi_{\lambda}'(v)+\lambda\xi_{\lambda}(v)=0,\]
        with the initial conditions
        \[\xi_{\lambda}(0)=1,\quad\xi_{\lambda}'(0)=0.\]
    \end{lemma}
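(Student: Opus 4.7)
The strategy is to unfold $a(f,t_{j})$ using $\Gamma$-invariance of $u_{j}$, reduce the resulting integral to one in $v$ alone, and identify the $v$-integrand as the unique solution of the stated ODE. Since $A(f)$ sums over $H\backslash\Gamma$ and $u_{j}$ is $\Gamma$-invariant, standard unfolding gives $a(f,t_{j})=\int_{H\backslash\hyps}f(1/\cos^{2}v(p))u_{j}(p)\,d\mu(p)$. Parametrising the fundamental domain $F=\bigcup_{\theta}S_{\theta}$ as $(x,u,v)\in S\times(-\pi/2,\pi/2)$ and applying Lemma~\ref{lem:coords} yields
\[a(f,t_{j})=\int_{-\pi/2}^{\pi/2}\frac{f(1/\cos^{2}v)}{\cos^{3}v}\,F(v)\,dv,\qquad F(v):=\int_{S}u_{j}(x,u,v)\,\frac{dx\,du}{e^{u}}.\]

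Since the weight $f(1/\cos^{2}v)/\cos^{3}v$ is even in $v$, only the combination $F_{0}(v):=F(v)+F(-v)$ contributes, and the $v$-integral collapses to $(0,\pi/2)$. Setting $H_{0}=\pslr\cap\Gamma$, the previous lemma gives $H=H_{0}\cup\iota H_{0}$, so $\fund[\uph]{H_{0}}$ coincides with $S\cup\iota S$ up to measure zero. Using the $\iota$-action $(x,u,v)\mapsto(-x,u,-v)$ together with $\Gamma$-invariance of $u_{j}$, one rewrites $F_{0}(v)=\int_{\fund[\uph]{H_{0}}}u_{j}(x,u,v)\,dx\,du/e^{u}$. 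At each fixed $v$, the function $u_{j}(\cdot,\cdot,v)$ is $H_{0}$-automorphic on $\hypp$, which is the essential property needed below.

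Decompose $\Delta=\cos^{2}v\,\Delta_{2}+\cos^{2}v\,\partial_{v}^{2}+\sin v\cos v\,\partial_{v}$, where $\Delta_{2}=e^{2u}\partial_{x}^{2}+\partial_{u}^{2}-\partial_{u}$ is the hyperbolic Laplacian on $\hypp$. Substituting $-\Delta u_{j}=\lambda_{j}u_{j}$, integrating over $\fund[\uph]{H_{0}}$ against $dx\,du/e^{u}$, and differentiating under the integral sign gives
\[-\lambda_{j}F_{0}(v)=\cos^{2}v\Bigl[\int_{\fund[\uph]{H_{0}}}\Delta_{2}u_{j}\,\tfrac{dx\,du}{e^{u}}+F_{0}''(v)\Bigr]+\sin v\cos v\,F_{0}'(v).\]
By Green's identity the first integral vanishes (boundary contributions cancel by $H_{0}$-periodicity), producing exactly the ODE defining $\xi_{\lambda_{j}}$. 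Since $F_{0}(0)=2\hat u_{j}$ and $F_{0}'(0)=0$ (the latter by evenness), uniqueness of ODE solutions gives $F_{0}(v)=2\hat u_{j}\,\xi_{\lambda_{j}}(v)$, and substituting back into the $v$-integral completes the proof.

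The subtlety to watch is the symmetrisation: because $\iota$ couples $v$ with $x$, the function $u_{j}(\cdot,\cdot,v)$ on a slice $v=\theta>0$ is invariant only under $H_{0}$, not under all of $H$. Working with $F_{0}$ rather than $F$ effectively enlarges the domain of integration in $(x,u)$ from $S$ to $\fund[\uph]{H_{0}}$, which is precisely what makes Green's identity applicable and accounts for the factor of $2$ in the statement.
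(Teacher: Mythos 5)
Your argument is correct and follows essentially the same route as the paper: unfold over $H\backslash\hyps$, reduce to the slice integral $\varphi_{j}(v)$, kill the tangential Laplacian term by Green's identity via the side-pairing, and identify the symmetrisation $\varphi_{j}(v)+\varphi_{j}(-v)=2\period_{j}\xi_{\lambda_{j}}(v)$ through the ODE and its initial conditions. Your closing remark is in fact slightly more careful than the paper's proof, which asserts the boundary cancellation $I_{v}=0$ slice by slice even though side-pairing elements lying in $H\setminus\pslr$ send the slice at $v$ to the slice at $-v$, so that strictly one only gets $I_{v}+I_{-v}=0$ --- which is exactly what your symmetrised $F_{0}$ exploits and is all the argument requires (the only nitpick being that the nontrivial coset of $H_{0}$ in $H$ is $\eta H_{0}$ for some $\eta\in\Gamma$ of the form $\iota\tau$, not necessarily $\iota H_{0}$ itself).
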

    In the following proofs we work with a fixed $\lambda$ and denote $\xi_{\lambda}$ by $\xi$.
    \begin{proof}
        Unfolding the spectral coefficients,
        \begin{align*}
            a(f,t_{j}) &= \int_{\hypmodg}\sum_{\gamma\in\cosets}f\left(\frac{1}{\cos^{2}v(\gamma p)}\right)u_{j}(p)\d{\mu(p)}\\
            &= \int_{H\backslash\hyps}f\left(\frac{1}{\cos^{2}v}\right)u_{j}(x,u,v)\frac{dx\,du\,dv}{e^{u}\cos^{3}v}.
        \end{align*}
        We can express this in terms of the period integral as
        \[a(f,t_{j})=\int_{-\frac{\pi}{2}}^{\frac{\pi}{2}} f\left(\frac{1}{\cos^{2}v}\right)\varphi_{j}(v)\frac{dv}{\cos^{3} v},\]
        where
        \[\varphi_{j}(v) = \int_{S_{v}} u_{j}(x,u,v)\frac{du\,dx}{e^{u}}.\]
        It is immediate that $\varphi_{j}$ is even.
        According to Lemma~\ref{lem:coords}, in our new coordinates the eigenvalue equation becomes:
        \[e^{2u}\cos^{2}v\pd[2][u_{j}]{x} + \cos^{2}v\left(\pd[2][u_{j}]{u}+\pd[2][u_{j}]{v}\right)-\cos^{2}v\pd[][u_{j}]{u}+\sin v\cos v\pd[][u_{j}]{v}+\lambda u_{j} = 0.\]
        Now, dividing by $e^{u}$ and integrating over $S_{v}$ we get
        \begin{multline}\label{eigeneq}
            \cos^{2}v\int_{S_{v}}e^{2u}\pd[2][u_{j}]{x}\frac{dx\,du}{e^{u}}+\cos^{2}v\int_{S_{v}}\pd[2][u_{j}]{u}\frac{du\,dx}{e^{u}}+\cos^{2}v\pd[2][\varphi_{j}]{v}\\
            -\cos^{2}v\int_{S_{v}}\pd{u}\int u_{j}\frac{dx\, du}{e^{u}}+\sin v\cos v\pd[][\varphi_{j}]{v}+\lambda\varphi_{j}(v)=0.
        \end{multline}
        Next, notice that the Laplacian on $S_{v}$ in the induced metric is exactly the restriction of $\Delta$ to $S_{v}$, that is,
        $\Delta\restriction_{S_{v}}=\cos^{2}v \Delta_{S_{v}}$, where
        \[\Delta_{S_{v}}=e^{2u}\pd[2]{x}+\pd[2]{u}-\pd{u}.\]
        Hence, for a fixed $v$,~\eqref{eigeneq} becomes
        \begin{equation}\label{eq:eigeneq2}
            \cos^{2}v\int_{S_{v}}\Delta_{S_{v}}u_{j}\frac{dx\,du}{e^{u}}+\cos^{2}v\pd[2][\varphi_{j}]{v}+\sin v\cos v\pd[][\varphi_{j}]{v}+\lambda\varphi_{j}(v)=0.
        \end{equation}
        Denote the integral in~\eqref{eq:eigeneq2} by $I_{v}$. Then, by Stokes' theorem we have that
        \[I_{v}=\int_{\partial S_{v}}\nabla u_{j}\cdot\vec{n}\d{\ell},\]
        where $d\ell$ is the line element on $S_{v}$ and $\vec{n}$ is the unit normal vector (on the plane) to $S_{v}$. We wish to show that $I_{v}=0$.
        To do this, recall some basic terminology for fundamental domains (in $\uph$) from \textcite{beardon1983}.
        Let $\fund{}$ be a fundamental domain of a cofinite or cocompact Fuchsian group $G$.
        Then $\fund{}$ is a convex hyperbolic polygon with finitely many sides.
        A \emph{side} of $\fund{}$ is a geodesic segment of the form $\overline{\fund{}}\cap g\overline{\fund{}}$ for any $g\in G$ with $g\neq I$.
        A \emph{vertex} of $\fund{}$ is a point of the form $\overline{\fund{}}\cap g\overline{\fund{}}\cap h\overline{\fund{}}$ for any $g\neq h\in G$ such that
        $g,h\neq I$. If $\partial\fund{}$ contains an elliptic fixed point of $g\in G$ of order 2, then we consider the fixed point
        as a vertex of $\fund{}$ and moreover $g$ identifies the adjacents sides with opposite orientation. In general, we can always find
        a side-pairing for $\fund{}$, that is, for $i=1$, $\ldots$, $k$, there exist triples $(\Lambda_{i},\Psi_{i},g_{i})$ such that
        $g_{i}\Lambda_{i}=\Psi_{i}$, and $g_{i}$ is the unique element in $G$ that does this, and that $\Lambda_{i}$ or $\Psi_{i}$ are not
        paired with any other sides of $\fund{}$. Finally, we can always choose $\fund{}$ so that if we consider $\partial\fund{}$ as a contour
        in $\hyps$, then the congruent sides occur with opposite orientation as segments of the contour~\cite[pp.~2--4]{hejhal1983}.
        So, let $\Set{(\Lambda_{i},\Psi_{i},g_{i})\given i=1,\ldots, k}$ be a side-pairing of $S$. Then it immediately follows that
        for any $S_{v}$ we get a corresponding side-pairing. Denote these by $\Set{(\Lambda_{i}^{v},\Psi_{i}^{v},g_{i})\given i=1,\ldots,k}$,
        where $\proj{(\Lambda_{i}^{v})}=\Lambda_{i}$ and $\proj{(\Psi_{i}^{v})}=\Psi_{i}$. It follows that $I_{v}=0$ as the integral over $\Lambda_{i}^{v}$
        is cancelled by the one over $\Psi_{i}^{v}$ since $\nabla u_{j}\cdot\vec{n}$ is invariant under $H$.
        We are left with
        \begin{equation}\label{eq:ode}
            \cos^{2}v\,\varphi''(v)+\sin v\cos v\, \varphi'(v)+\lambda\varphi(v)=0,
        \end{equation}
        where $\varphi'(0)=0$, as $\varphi$ is even.
        Define,
        \[\omega(v)=\varphi(v)+\varphi(-v),\]
        for $v\in(-\pi/2,\pi/2)$.
        Hence, adding~\eqref{eq:ode} evaluated at $-v$ to itself yields
        \begin{equation}\label{eq:fullode}
            \cos^{2}v\,\omega''(v) + \sin v\cos v\,\omega'(v) + \lambda\omega(v)=0,
        \end{equation}
        with $\omega(0) = 2\period_{j}$ and $\omega'(0) = 0$.
        Now, suppose that $\xi(v)$ is a solution to the second order homogenous linear ODE
        \begin{equation}\label{ode}
            \cos^{2}v\,\xi''(v)+\sin v\cos v\,\xi'(v)+\lambda\xi(v)=0,
        \end{equation}
        with initial conditions $\xi(0) = 1$ and $\xi'(0) = 0$.
        Then we can write the full solution $\omega$ of~\eqref{eq:fullode} as
        \[\omega(v)=2\period_{j}\xi(v).\]
        Therefore, the $a(f,t_{j})$'s can be written as
        \begin{align*}
            a(f,t_{j}) &=\left(\int_{0}^{\pi/2}+\int_{-\pi/2}^{0}\right)f\left(\frac{1}{\cos^{2}v}\right)\varphi(v)\frac{dv}{\cos^{3}v}\\
            &=2\period_{j}\int_{0}^{\pi/2}f\left(\frac{1}{\cos^{2}v}\right)\xi(v)\frac{dv}{\cos^{3}v}.
        \end{align*}
    \end{proof}
    We will also need some estimates on $\xi_{\lambda}$. Notice that the following lemma does not use the explicit
    form of $\xi_{\lambda}$ (which we will compute later). This computation is analogous to~\cite[\S4.2]{huber1998}.
    \begin{lemma}
        For all $v\in[0,\pi/2)$ we have
        \begin{align}
            \abs{\xi_{\lambda}(v)}&\leq 1,\label{eq:xibound1} \\
            \xi_{\lambda}(v)&\geq 1-\frac{2+\lambda}{2}\tan^{2}v.\label{eq:xibound2}
        \end{align}
    \end{lemma}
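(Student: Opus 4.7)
The plan is to establish both bounds via an energy method for the ODE from Lemma~\ref{lem:specexp}; throughout I keep the convention $\xi := \xi_{\lambda}$ introduced above. The case $\lambda = 0$ is immediate: the ODE reduces to $(\sec v\,\xi')' = 0$, which combined with $\xi(0)=1$ and $\xi'(0)=0$ forces $\xi\equiv 1$, so from now on I focus on $\lambda>0$.

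Introduce the energy
\[
E(v) := \tfrac{1}{2}\cos^{2}v\,\xi'(v)^{2} + \tfrac{\lambda}{2}\xi(v)^{2}.
\]
A direct computation using the ODE yields $E'(v) = -2\sin v\cos v\,\xi'(v)^{2} \le 0$ on $[0,\pi/2)$, so $E(v) \le E(0) = \lambda/2$. Dropping the kinetic term gives $\tfrac{\lambda}{2}\xi(v)^{2} \le \lambda/2$, which is~\eqref{eq:xibound1}.

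For~\eqref{eq:xibound2}, I retain the kinetic term instead. The inequality $E(v)\le\lambda/2$ combined with~\eqref{eq:xibound1} gives $\cos^{2}v\,\xi'(v)^{2} \le \lambda(1-\xi(v)^{2}) \le 2\lambda(1-\xi(v))$, where the last step uses $1+\xi \le 2$. Setting $\phi := 1-\xi \ge 0$, this reads $\cos v\,\abs{\xi'(v)} \le \sqrt{2\lambda\,\phi(v)}$, and integrating with $\phi(0)=0$,
\[
\phi(v) \le \int_{0}^{v}\abs{\xi'(t)}\,dt \le \sqrt{2\lambda}\int_{0}^{v}\sqrt{\phi(t)}\,\sec t\,dt.
\]
A Gronwall step---denote the right-hand side by $a(v)$, observe $(2\sqrt{a})' = a'/\sqrt{a} \le \sqrt{2\lambda}\,\sec v$, and integrate---produces $\phi(v) \le \tfrac{\lambda}{2}\log^{2}(\sec v + \tan v)$. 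Finally, the identity $\log(\sec v + \tan v) = \arcsinh(\tan v)$ combined with the elementary inequality $\arcsinh(x) \le x$ for $x\ge 0$ gives $\phi(v) \le \tfrac{\lambda}{2}\tan^{2}v$, which is in fact sharper than~\eqref{eq:xibound2} since $\lambda/2 \le (2+\lambda)/2$ for $\lambda \ge 0$.

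The only mildly delicate point is the legitimacy of the Gronwall step where $a$ vanishes at $v=0$. This is handled by the Taylor expansion $\xi(v) = 1 - \tfrac{\lambda}{2}v^{2}+O(v^{4})$, obtained from the ODE and the initial conditions, which shows $\phi(v) \sim \tfrac{\lambda}{2}v^{2}$ near $0$ and hence $a(v)>0$ for $v>0$, so the division by $\sqrt{a}$ is valid.
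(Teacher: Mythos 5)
Your proof is correct. For~\eqref{eq:xibound1} you are doing essentially what the paper does: multiplying~\eqref{ode} by $2\xi'$ and integrating over $[0,x]$ is the same computation as differentiating your energy $E$, and both arguments come down to the monotonicity $E'(v)=-\sin(2v)\,\xi'(v)^{2}\leq 0$ on $[0,\pi/2)$. For~\eqref{eq:xibound2} your route is genuinely different. The paper feeds $\abs{\xi}\leq 1$ back into the ODE to obtain the pointwise lower bound $\xi''\geq -\tan v\,\xi'-\lambda\sec^{2}v$ and integrates twice (handling $\int\tan v\,\xi'$ by parts, using $\abs{\xi}\leq 1$ once more), arriving at $\xi(v)-1\geq(2+\lambda)\log\cos v\geq-\tfrac{2+\lambda}{2}\tan^{2}v$. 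You instead retain the kinetic term in $E(v)\leq\lambda/2$, deduce $\cos v\,\abs{\xi'}\leq\sqrt{2\lambda(1-\xi)}$, and close with a nonlinear Gronwall step. The payoff is the sharper constant $\lambda/2$ in place of $(2+\lambda)/2$, which is optimal to leading order: the explicit solution~\eqref{eq:xiv} has Taylor expansion $1-\tfrac{\lambda}{2}v^{2}+O(v^{4})$. The cost is the degeneracy of the Gronwall quantity at $v=0$, which you handle adequately (one could equally replace $a$ by $a+\varepsilon$ and let $\varepsilon\to0$). The improvement is not material downstream---in Lemma~\ref{lem:period} only the order of magnitude $h(\theta)\asymp\tan\theta\asymp y^{-1/2}$ is used, so either constant yields $\sum_{\abs{t_{j}}\leq T}\abs{\period_{j}}^{2}\ll T$---but your bound is a clean strengthening and the argument is sound.
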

    \begin{proof}
        Multiplying~\eqref{ode} by $2\xi'(v)$, we can write
        \[\cos^{2}v \left(\xi'(v)^{2}\right)'+2\sin v\cos v (\xi'(v))^{2}+\lambda(\xi(v)^{2})'=0.\]
        Now, integrate over $[0,x]$ and use $\xi(0)=1$ and $\xi'(0)=0$
        \[\lambda(1-\xi(x)^{2})=\cos^{2}x \xi'(x)^{2}+2\int_{0}^{x}\sin 2v\, \xi'(v)^{2}\d{v}\geq 0,\]
        since $x\in[0,\pi/2)$ so that $\sin 2v$ is non-negative.
        This proves the first part.
        Now we can apply~\eqref{eq:xibound1} to get
        \begin{align*}
            \xi''(v) &\geq -\tan v \xi'(v)-\lambda\sec^{2}v.
        \end{align*}
        Integrating twice over $[0,x]$ and $[0,v]$ yields
        \[\xi(v)-1\geq(2+\lambda)\log(\cos v)\geq \frac{-1}{2}(2+\lambda)\tan^{2}v.\]
        Thus,
        \[\xi(v)\geq 1-\frac{2+\lambda}{2}\tan^{2}v.\]
    \end{proof}
    With this, we have the following Hecke type bound for the mean square of the period integrals.
    \begin{lemma}\label{lem:period}
        Let $\period_{j}$ be the period integral over $H\backslash\hypp$ of the automorphic form $u_{j}\in L^{2}(M)$. Then, for $T>1$
        \[ \sum_{\abs{t_{j}}\leq T}\abs{\period_{j}}^{2}\ll T.\]
    \end{lemma}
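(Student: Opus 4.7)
The plan is to apply Parseval's identity to a suitably chosen member of the family $A(f)$ from Lemma~\ref{lem:specexp}, with $f$ the indicator of a short interval, and to compare the spectral and geometric sides. Fix a small absolute constant $c>0$ to be determined, set $\delta = c/T$, and take $f = \ind_{[1,\sec^{2}\delta]}$. With this choice $A(f)(p) = N(p,\sec\delta)$, which by cocompactness of $\Gamma$ (cf.~Remark~\ref{rem:cocompact}) is uniformly bounded in $p$ independently of $T$, since $\sec\delta\leq 2$ and $N(p,2)$ is bounded on the compact quotient.

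On the spectral side, Lemma~\ref{lem:specexp} gives $a(f,t_{j}) = 2\period_{j}c(f,t_{j})$ with
\[
c(f,t_{j}) = \int_{0}^{\delta}\xi_{\lambda_{j}}(v)\frac{dv}{\cos^{3}v}.
\]
Using the quadratic lower bound~\eqref{eq:xibound2} together with $\lambda_{j}\leq 1+T^{2}$ for $|t_{j}|\leq T$, one chooses $c$ small enough that $\xi_{\lambda_{j}}(v)\geq 1/2$ throughout $[0,\delta]$. This forces $c(f,t_{j})\geq \delta/2$ whenever $|t_{j}|\leq T$, and Parseval on $\lpg$ (applicable because $A(f)$ is bounded on compact $M$) yields
\[
\delta^{2}\sum_{|t_{j}|\leq T}|\period_{j}|^{2}\;\leq\;4\sum_{j}|\period_{j}|^{2}|c(f,t_{j})|^{2}\;=\;\int_{\hypmodg}N(p,\sec\delta)^{2}\d{\mu(p)}.
\]

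For the geometric side, unfolding the coset sum and using the fundamental-domain description $F=\bigcup_{\theta}S_{\theta}$ together with Lemma~\ref{lem:coords} gives
\[
\int_{\hypmodg}N(p,\sec\delta)\d{\mu(p)} \;=\; \int_{-\delta}^{\delta}\int_{S}\frac{dx\,du}{e^{u}\cos^{3}v}\,dv \;=\; \vol(H\backslash\hypp)\int_{-\delta}^{\delta}\sec^{3}v\,dv \;\ll\; \delta.
\]
Combined with the uniform bound $N(p,\sec\delta)\ll 1$, this upgrades to $\int_{\hypmodg}N(p,\sec\delta)^{2}\d{\mu(p)}\ll \delta$. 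Feeding this into the spectral inequality gives $\sum_{|t_{j}|\leq T}|\period_{j}|^{2}\ll 1/\delta \ll T$, as required.

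The whole argument is a clean spectral-geometric duality, paralleling the two-dimensional Hecke bound for periods of Maa\ss\ forms over closed geodesics (cf.~\cite{chatzakos2015}). The main technical ingredient is the quadratic lower bound~\eqref{eq:xibound2} on $\xi_{\lambda_{j}}$, which restricts the admissible test window to $\delta\sim 1/T$; pairing this with a geometric side of size $\delta$ is precisely what produces the linear-in-$T$ bound. No serious obstacle is anticipated beyond bookkeeping of constants; in particular, any mild smoothing of the indicator $f$ that may be desirable for the $L^{2}$ convergence of the spectral expansion does not affect either the lower bound for $c(f,t_{j})$ or the geometric estimate.
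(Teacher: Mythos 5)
Your proof is correct and follows essentially the same route as the paper: an indicator test function on an angular window of width $\asymp 1/T$, the uniform bound $\int (A(f))^{2}\leq K\int A(f)$ from cocompactness, Bessel/Parseval on the spectral side, and the quadratic lower bound~\eqref{eq:xibound2} to force $c(f,t_{j})\gg\delta$ for $\abs{t_{j}}\leq T$. The paper merely packages the window as $\tan\theta=\sqrt{2/(y+2)}$ and evaluates the geometric side through the zeroth spectral coefficient rather than by direct unfolding, which amounts to the same computation.
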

    This is a surprising result in the sense that the order of growth is better than what we expect from the local Weyl law.
    We suspect that the mean square should be bounded in all dimensions, cf.~\textcite[Theorem~1]{tsuzuki2009}.
    The proof is analogous to~\cite[\S2.6]{huber1998}.
    \begin{proof}
        Let $K=\sup_{p\in\hyps}N(p,X)$. This is well-defined as $N(p,X)$ is uniformly bounded.
        Then
        \[\int_{\hypmodg}(A(f)(p))^{2}\d{\mu(p)}\leq K\int_{\hypmodg}A(f)(p)\d{\mu(p)}.\]
        Also, define
        \[\tan\theta = \sqrt{\frac{2}{y+2}}.\]
        Then
        \[a(f,t_{j})=2\period_{j}\int_{0}^{\theta}\frac{\xi_{\lambda_{j}}(v)}{\cos^{3} v}\d{v}\]
        with $\xi_{0}(v)=1$. In particular, the coefficient for the zero eigenvalues (so $t_{j}=i$) gives
        \begin{align*}
            a(f, t_{0}) &= 2 \period_{0}\int_{0}^{\theta}\cos^{-3}v\d{v}\\
            &= 2 \period_{0}\left(\frac{1}{\sqrt{2}}\frac{\sqrt{4+y}}{y+2}+\frac{1}{2}\log\abs*{\frac{\sqrt{2}+\sqrt{4+y}}{\sqrt{y+2}}}\right)\\
            &= 2 \period_{0}g(y),
        \end{align*}
        say. On the other hand,
        \[\frac{1}{2}a(f,t_{0})=\period_{0}g(y)=u_{0}\int_{\hypmodg}A(f)(p)\d{\mu(p)}.\]
        It follows that
        \begin{equation}\label{eq:pars1}
            \int_{\hypmodg} (A(f)(p))^{2}\d{\mu(p)} \leq K\frac{\period_{0}g(y)}{u_{0}},
        \end{equation}
        where $u_{0}$ is the constant eigenfunction.
        By Parseval we have
        \[\int_{\hypmodg} (A(f)(p))^{2}\d{\mu(p)} = \sum_{j=0}^{\infty}\abs{a(f,t_{j})}^{2}\geq \sum_{\lambda_{j}\leq y}\abs{a(f,t_{j})}^{2},\]
        so that
        \begin{equation}\label{eq:pars2}
            \int_{\hypmodg} (A(f)(p))^{2}\d{\mu(p)} \geq \sum_{\lambda_{j}\leq y}\abs{\period_{j}}^{2}\left(\int_{0}^{\theta}\frac{\xi_{\lambda_{j}}(v)}{\cos^{3}v}\d{v}\right)^{2}.
        \end{equation}
        Since $\lambda_{j}\leq y$, from the bound~\eqref{eq:xibound2} it follows that:
        \[\xi_{\lambda_{j}}\geq 1-\frac{2+\lambda_{j}}{2}\tan^{2}v\geq 1-\frac{2+y}{2}\tan^{2}v.\]
        Hence,
        \begin{align*}
            \int_{0}^{\theta}\frac{\xi_{\lambda_{j}}(v)}{\cos^{3}v}\d{v} &\geq\int_{0}^{\theta}\left(1-\frac{2+y}{2}\tan^{2}v\right)\frac{dv}{\cos^{3}v}\\
            &=\frac{1}{8\tan^{2}\theta}\left(\tan\theta \sec\theta (2\sec^{2}\theta - 3)+(1+4\tan^{2}\theta)\log\left(\frac{1+\tan\frac{\theta}{2}}{1-\tan\frac{\theta}{2}}\right)\right)\\
            &=h(\theta).
        \end{align*}
        We are interested in the behaviour of $h$ as $y\tendsto\infty$, that is, $\theta\tendsto0^{+}$.
        After a tedious but elementary computation we find that
        \[\lim_{\theta\tendsto0^{+}}\frac{h(\theta)}{\tan\theta}=\frac{2}{3}.\]
        This means that $h(\theta)\geq(\frac{2}{3}-\epsilon)\tan\theta$ for small enough $\theta$ and for some $\epsilon>0$. In other words, we have proved that
        \begin{equation}\label{eq:pars3}
            \int_{0}^{\theta}\frac{\xi_{\lambda_{j}}(v)}{\cos^{3}v}\d{v}\geq cy^{-1/2},
        \end{equation}
        for some constant $c>0$, as $y\tendsto\infty$. Now, combining~\eqref{eq:pars1},~\eqref{eq:pars2} and~\eqref{eq:pars3} we get
        \[\sum_{\lambda_{j}\leq y}\abs{\period_{j}}^{2}\leq \frac{K\period_{0}}{c^{2}u_{0}}g(y)y\ll y^{1/2}.\]
        The result follows from observing that $\lambda_{j}=1+t_{j}^{2}$.
    \end{proof}
    As pointed out earlier, we can actually express $\xi$ in an elementary form. We suspect that this is always possible in
    odd dimensional hyperbolic space. In even dimensions the special functions are more complicated Legendre or hypergeometric functions.
    Let $r=\tan v$, then
    \begin{equation}\label{fourierc}
        a(f,t_{j})=2\period_{j}\int_{0}^{\infty}f(1+r^{2})\xi(\arctan r)\sqrt{1+r^{2}}\d{r}.
    \end{equation}
    Apply the transformation $\tan v = \sinh w$ in~\eqref{ode}. It becomes
    \[\xi''(w)+2\tanh w\, \xi'(w)+\lambda\xi(w)=0.\]
    Since $\lambda =s_{j}(2-s_{j})$, we have $1-\lambda=(s_{j}-1)^{2}$. It is then easy to see that the solution with our initial conditions is
    \begin{equation}\label{eq:xiw}
        \xi(w)=\frac{\cosh w(s_{j}-1)}{\cosh w},
    \end{equation}
    or in terms of $r$,
    \begin{equation}\label{eq:xir}
        \xi(r)=\frac{\cosh((s_{j}-1)\arcsinh r)}{\sqrt{1+r^{2}}}.
    \end{equation}
    Thus we can finally write the explicit form for $\xi$ in terms of $v$ as
    \begin{equation}\label{eq:xiv}
        \xi(v)=\frac{\cosh((s_{j}-1)\arcsinh\tan v)}{\sec v}.
    \end{equation}

    We will now show how to estimate the spectral coefficients $a(f,t_{j})$.
    With the explicit form~\eqref{eq:xir} for $\xi$, we can write
    \begin{equation}
        a(f,t_{j})=2\period_{j}\int_{0}^{\infty}f(1+r^{2})\cosh((s_{j}-1)\arcsinh r)\d{r}.
    \end{equation}
    We are thus led to consider the integral transform
    \[c(f,t)=\int_{0}^{\infty}f(1+r^{2})\cosh((s-1)\arcsinh r)\d{r},\]
    where $s=1+it$.
    Now, define
    \[f\left(\frac{1}{\cos^{2}v(p)}\right)=\begin{cases} 1,&\text{if $0\leq \abs{v}\leq \Theta$,}\\ 0,&\text{if $\Theta<\abs{v}<\frac{\pi}{2}$,}\end{cases}\]
    or equivalently
    \[f\left(\frac{1}{\cos^{2}v(p)}\right)=\begin{cases} 1,&\text{if $1\leq \sec v\leq X $,}\\ 0,&\text{if $X<\sec v$.}\end{cases}\]
    If we let $r=\tan v$, then we get that
    \[f\left(1+r^{2}\right)=\begin{cases} 1,&\text{if $0\leq r\leq U $,}\\ 0,&\text{if $U<r$,}\end{cases}\]
    where $U=\tan\Theta=\sqrt{X^{2}-1}$. Notice that
    \[U = X\sqrt{1-X^{-2}}=X\left(1+O(X^{-2})\right)=X+O(X^{-1}).\]
    In particular,
    \[A(f)(p)=\widetilde{N}(p,\Theta)=N(p,X).\]
    Now, letting $r=\sinh u$, we can rewrite $c(f,t)$ as
    \[c(f,t)=\int_{0}^{\infty}f(\cosh^{2}u)\cosh((s-1)u)\cosh u\d{u}.\]
    Notice that $2\cosh((s-1)u)\cosh u=\cosh su+\cosh(2-s)u$, so that
    \[c(f,t)=\frac{1}{4}\int_{\reals}f(\cosh^{2}u)\cosh su\d{u}+\frac{1}{4}\int_{\reals}f(\cosh^{2}u)\cosh (2-s)u\d{u},\]
    where
    \[f(\cosh^{2}u) =\begin{cases} 1, &\text{ if $\abs{u}\leq\arcsinh U$,}\\ 0, &\text{otherwise.}\end{cases}\]
    Since both of the integrals in $c(f,t)$ are of the same type, we define the integral transform $d(f,s)$ given by
    \begin{equation}\label{eq:dtransform}
        d(f,s)=\int_{\reals}f(\cosh^{2}u)\cosh su\d{u}.
    \end{equation}
    We list some simple properties of the $d(f,s)$-transform without proof.
    \begin{lemma}\label{lemma:dtransform}
        Suppose $f$ and $g$ are compactly supported even functions with finitely many discontinuities, let $\alpha\in\reals$, then
        \begin{align*}
            4c(f,t) &= d(f,s) + d(f,2-s),\\
            d(\alpha f, s)&=\alpha d(f,s),\\
            d(f\ast g,s)&=d(f,s)d(g,s),\\
            \intertext{where $\ast$ is the usual convolution. Also}
            d(\ind_{[-T,T]},s)&=\frac{2\sinh sT}{s},\\
            \intertext{where $\ind_{[-T,T]}(\cosh^{2}u)$ is the indicator function on $[-T,T]$, and}
            d(f,0)&=\int_{\reals}f(\cosh^{2}u)\d{u}.\\
        \end{align*}
    \end{lemma}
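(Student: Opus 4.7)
The plan is that all five identities reduce to straightforward manipulations of the defining integral, once we introduce the substitution $F(u) = f(\cosh^{2} u)$ (and likewise $G(u) = g(\cosh^{2} u)$), which is an even function of $u$. The only assertion whose content is not purely mechanical is the convolution identity, and even there the argument is standard once the right picture is in place.

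For the easy identities, I would dispatch them in the following order. Identity (2), linearity, is immediate from linearity of the integral. Identity (4) is the direct computation
\begin{equation*}
d(\ind_{[-T,T]},s) = \int_{-T}^{T}\cosh(su)\,du = \frac{2\sinh sT}{s},
\end{equation*}
and identity (5) follows from $\cosh(0\cdot u)=1$ in the definition \eqref{eq:dtransform}. Identity (1) is essentially already established in the discussion preceding the lemma: applying the product-to-sum formula $2\cosh((s-1)u)\cosh u = \cosh su + \cosh((2-s)u)$ inside the integral for $c(f,t)$ and recognising the two resulting integrals as $d(f,s)$ and $d(f,2-s)$ yields $4c(f,t)=d(f,s)+d(f,2-s)$.

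The one step that warrants thought is the convolution identity (3). The plan is to observe that, since $F(u) = f(\cosh^{2}u)$ is even,
\begin{equation*}
d(f,s) = \int_{\reals} F(u)\cosh(su)\,du = \tfrac{1}{2}\int_{\reals}F(u)(e^{su}+e^{-su})\,du = \int_{\reals}F(u)e^{su}\,du,
\end{equation*}
where the last step uses the change of variables $u\mapsto -u$ together with the evenness of $F$. Thus $d(\,\cdot\,,s)$ is, up to this rewriting, a two-sided Laplace-type transform. Interpreting $f\ast g$ as the (even) function on $[1,\infty)$ whose pullback under $u\mapsto\cosh^{2}u$ is the ordinary convolution $F\ast G$ on $\reals$, the identity $d(f\ast g,s) = d(f,s)\,d(g,s)$ is then the classical factorisation of this transform under convolution, which I would verify by Fubini's theorem; compact support of $F$ and $G$ makes the interchange of integrations trivially justified.

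The only mild obstacle, which is notational rather than mathematical, is fixing the meaning of $f\ast g$ as a function of the argument $\cosh^{2}u$. Once the identification $F(u)=f(\cosh^{2}u)$ is in place and the evenness is used to pass from $\cosh su$ to $e^{su}$, no subtlety remains. Accordingly I expect the proof to be short and essentially self-contained.
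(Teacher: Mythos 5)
Your argument is correct; the paper states this lemma explicitly without proof, and your verification --- rewriting $d(\cdot,s)$ as a bilateral Laplace transform of the even function $F(u)=f(\cosh^{2}u)$ so that the convolution identity becomes the standard factorisation under Fubini, with the remaining identities being one-line computations --- is exactly the routine check the authors omit. Your interpretation of $f\ast g$ as convolution in the $u$-variable is the right one, as confirmed by the paper's subsequent use of the identity to compute $d(f^{+},s)$ as the product $d(\widetilde{f}^{+},s)\,d(\chi,s)^{2}$.
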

    Let $1>\delta>0$, and define $\chi(\cosh^{2}u)=(2\delta)^{-1}\ind_{[-\delta,\delta]}(\cosh^{2}u)$ to be a characteristic function with unit mass with respect to the
    $d(f,s)$-transform.
    Now define,
    \[\widetilde{f}^{+}(\cosh^{2}u)=\begin{cases}
            1, & \text{if $\abs{u}\leq \arcsinh U+2\delta$,}\\
            0, & \text{otherwise.}
        \end{cases}\]
    \[\widetilde{f}^{-}(\cosh^{2}u)=\begin{cases}
            1, & \text{if $\abs{u}\leq \arcsinh U-2\delta$,}\\
            0, & \text{otherwise.}
        \end{cases}\]
    Let $f^{+}=\widetilde{f}^{+}\ast\chi\ast\chi$ and $f^{-}=\widetilde{f}^{-}\ast\chi\ast\chi$.
    Then $f^{+}(x)=1$ for $\abs{x}\leq \arcsinh U$ and vanishes for $\abs{x}\geq \arcsinh U+4\delta$, and similarly
    $f^{-}(x)$ vanishes for $\abs{x}\geq \arcsinh U$. It follows that
    \[A(f^{-})(p) \leq N(p,X) \leq A(f^{+})(p).\]
    Hence, in order to estimate $N(p,X)$ we need bounds for $A(f^{+})$ and $A(f^{-})$, which in turn leads us to
    investigate $c(f^{\pm},t)$. The case for $f^{-}$ is analogous, so we restrict the treatment below to $f^{+}$.
    \begin{remark}
        Without any smoothing, the spectral expansion for $A(f)$ would of course not converge. In two dimensions
        it suffices to use a single convolution (linear decay). In our case we need at least two convolutions to
        ensure convergence.
        On the other hand, any more smoothing in this manner does not yield improvements for the pointwise bound
        nor for the application of the large sieve.
    \end{remark}
    \begin{prop}\label{prop:estimates}
        The integral transform $c(f,t)$ satisfies the following properties:
        \begin{enumerate}[(i)]
            \item\label{prop:estimates1} For $s=1+it$ we can write
                \begin{equation}\label{eq:oscillatory}
                    c(f^{+},t) = a(t, \delta)X^{1+it} + b(t,\delta)X^{1-it},
                \end{equation}
                where $a$ and $b$ satisfy
                \[a(t,\delta),\, b(t,\delta) \ll \min(\abs{t}^{-1},\abs{t}^{-3}\delta^{-2}).\]
            \item\label{prop:estimates2} For $s\in[1,2]$ we have
                \[c(f^{+},t) = \frac{2^{s-2}}{s}X^{s}+\frac{2^{-s}}{2-s}X^{2-s}+O(\delta X^{s}),\]
                where the case of $s=2$ is understood as
                \[c(f^{+},i) = \frac{X^{2}}{2}+O(\delta X^{2}).\]
        \end{enumerate}
    \end{prop}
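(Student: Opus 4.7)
The plan is to compute $d(f^{+}, s)$ explicitly via the convolution identity of Lemma~\ref{lemma:dtransform} and then use $4c(f^{+}, t) = d(f^{+}, s) + d(f^{+}, 2-s)$. Since $f^{+} = \widetilde{f}^{+}\ast\chi\ast\chi$, the multiplicativity of the transform together with $d(\ind_{[-T,T]}, s) = 2\sinh(sT)/s$ and $d(\chi, s) = \sinh(s\delta)/(s\delta)$ gives
\[
d(f^{+}, s) = \frac{2\sinh\bigl(s(\arcsinh U + 2\delta)\bigr)\,\sinh^{2}(s\delta)}{s^{3}\delta^{2}}.
\]
Expanding $2\sinh(sA) = e^{sA} - e^{-sA}$ with $A = \arcsinh U + 2\delta$, and using $e^{\arcsinh U} = U + \sqrt{1+U^{2}} = U + X$ together with $U + X = 2X + O(X^{-1})$, this decomposes into explicit pieces behaving like $(2X)^{\pm s}$ times smooth factors in $s$ and $\delta$.

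For part~\ref{prop:estimates2} I would take $s \in [1,2]$ real and Taylor expand the smooth parts: $\sinh(s\delta)/(s\delta) = 1 + O(\delta^{2})$, $e^{2s\delta} = 1 + O(\delta)$, and $(U+X)^{s} = (2X)^{s}(1 + O(X^{-2}))$. The leading contribution becomes
\[
\tfrac{1}{4}d(f^{+}, s) = \frac{2^{s-2}}{s}X^{s} + O(\delta X^{s}),
\]
with the $(2X)^{-s}$ piece absorbed into the error. Applying the same analysis to $d(f^{+}, 2-s)$ produces the symmetric contribution $\frac{2^{-s}}{2-s}X^{2-s}$. The endpoint $s=2$ must be handled separately because of the apparent pole at $2-s=0$: there $d(f^{+}, 0) = 2(\arcsinh U + 2\delta) \ll \log X$ by direct computation from Lemma~\ref{lemma:dtransform}, which is absorbed into $O(\delta X^{2})$.

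For part~\ref{prop:estimates1} I would set $s = 1 + it$, so that $2-s = 1-it$, and group the resulting terms of $d(f^{+}, s) + d(f^{+}, 2-s)$ by their dependence on $X$: replacing $(U+X)^{\pm s}$ by $(2X)^{\pm s}$ folds a small error into $a(t,\delta)$ and $b(t,\delta)$, giving the shape $c(f^{+}, t) = a(t,\delta)X^{1+it} + b(t,\delta)X^{1-it}$. To estimate $a$ and $b$, the key inequality is the identity
\[
\abs{\sinh((1+it)\delta)}^{2} = \sinh^{2}\delta + \sin^{2}(t\delta) \ll \min\bigl(1,(1+\abs{t})^{2}\delta^{2}\bigr),
\]
combined with $\abs{(1+it)^{3}} \gg \abs{t}^{3}$ for $\abs{t} \geq 1$ and the uniform bound $\abs{e^{2(1+it)\delta}} = e^{2\delta} \ll 1$. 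The two regimes in the minimum inside the $\sinh$ bound produce exactly the two competing bounds $\abs{t}^{-1}$ and $\abs{t}^{-3}\delta^{-2}$.

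The main obstacle will be the bookkeeping of error terms: each of the three approximations $(U+X)^{\pm s} \approx (2X)^{\pm s}$, $\sinh(s\delta) \approx s\delta$, and $e^{2s\delta} \approx 1$ introduces corrections that must be shown either to be dominated by $O(\delta X^{s})$ in part~\ref{prop:estimates2} or to be absorbable into $a$ and $b$ in part~\ref{prop:estimates1}. The degenerate $s = 2$ case also requires a careful limit argument to verify that no $\log X$ term survives in the main term after cancellation.
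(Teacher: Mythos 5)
Your proposal is correct and follows essentially the same route as the paper: compute $d(f^{+},s)$ explicitly from the convolution identities of Lemma~\ref{lemma:dtransform}, use $e^{\arcsinh U}=U+X=2X+O(X^{-1})$, Taylor-expand the smooth factors for real $s\in[1,2]$ (handling $s=2$ via $d(\cdot,0)\ll\log X$), and bound the modulus of the oscillatory factors for $s=1+it$. The only cosmetic difference is that you use the closed identity $\abs{\sinh((1+it)\delta)}^{2}=\sinh^{2}\delta+\sin^{2}(t\delta)$ where the paper instead splits into the cases $\abs{s}\delta<1$ and $\abs{s}\delta\geq 1$; both give the same $\min(\abs{t}^{-1},\abs{t}^{-3}\delta^{-2})$ bound.
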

    \begin{proof}
        We have
        \begin{align*}
            d(f^{+},s) &=\frac{8\sinh s(\arcsinh U+2\delta)\sinh^{2}s\delta}{(2\delta)^{2}s^{3}}\\
            &= \frac{8\sinh^{2}s\delta}{(2\delta)^{2}s^{3}}\sinh (s\log(U+\sqrt{U^{2}+1}) +2s\delta).
        \end{align*}
        By Taylor expansion $U+\sqrt{U^{2}+1}=2U+O(U^{-1})$, so that
        \[(U+\sqrt{U^{2}+1})^{s}=(2U)^{s}+O(sU^{s-2})=(2X)^{s}+O(sX^{s-2}).\]
        Now suppose $s\in[1,2)$, then we may assume that $\abs{s}\delta<1$. So,
        \begin{align*}
            \sinh s(\arcsinh U +2\delta) 
            &= \frac{1}{2}\left((2X)^{s}+O(X^{s-2})\right)(1+O(\delta))+O(\delta X^{s}+\delta X^{s-2})\\
            &= \frac{1}{2}(2X)^{s}+O(\delta X^{s}).
        \end{align*}
        It follows that
        \[d(f^{+},s) =\frac{4\sinh^{2}s\delta}{(2\delta)^{2}s^{3}}\left((2X)^{s}+O(\delta X^{s})\right).\]
        Since $\abs{s}\delta<1$, we also have that $(\sinh s\delta)/s\delta=1+O(\delta)$, and
        \[d(f^{+},s) = \frac{1}{s}(1+O(\delta))((2X)^{s}+O(\delta X^{s}))=\frac{2^{s}}{s}X^{s}+O(\delta X^{s}).\]
        So
        \begin{equation*}
            c(f^{+},t) =\frac{2^{s-2}}{s}X^{s}+\frac{2^{-s}}{2-s}X^{2-s}+O(\delta X^{s}).
        \end{equation*}
        Now, for the smallest eigenvalue, $s=2$, we get
        \[c(f^{+},i) = \frac{1}{4}\left( 2X^{2}+O(\delta X^{2}) + O(\log X)\right),\]
        as $d(\chi,0) = 1$.
        This proves~(\ref{prop:estimates2}) in the proposition.
        We now consider the case when $s$ is complex, that is, $s=1+it$. Assume $t>0$ and $X>1$, to get
        \begin{multline*}
            \sinh((1+it)(\arcsinh U + 2\delta)) = \frac{1}{2}\biggl((2X+O(X^{-1}))^{1+it}e^{2\delta(1+it)}
            -(2X+O(X^{-1}))^{-1-it}e^{-2\delta(1+it)}\biggr).
        \end{multline*}
        Thus we can write
        \begin{equation}\label{eq:sinhestimate}
            \sinh((1+it)(\arcsinh U + 2\delta))=X^{1+it}\upsilon(t,\delta),
        \end{equation}
        where $\upsilon(t,\delta)$ is bounded for $0<\delta<1$ and any $t$. Hence,
        \[d(f^{+},1+it)=X^{1+it}\upsilon(t,\delta)\frac{2}{s}\left(\frac{\sinh s\delta}{s\delta}\right)^{2}.\]
        Now, suppose $\abs{s}\delta<1$, then $\sinh(s\delta)/(s\delta)\ll 1$.
        So in this case
        \[d(f^{+},1+it)=X^{1+it}\abs{t}^{-1}.\]
        On the other hand, if $\abs{s}\delta\geq 1$, then $\sinh s\delta=O(1)$ so that
        \[d(f^{+},1+it)=X^{1+it}\abs{t}^{-3}\delta^{-2}.\]
        Working similarly with $d(f^{+},1-it)$ proves~(\ref{prop:estimates1}).
    \end{proof}
    Before we can prove the theorem, we need to know the local Weyl's law in our setting.
    \textcite{elstrodt1998} prove this for Eisenstein series in Chapter~6 Theorem~4.10.
    It is clear that their proof can be extended to include the cuspidal part. This yields the following lemma.
    \begin{lemma}\label{lem:cusp}For $T>1$, we have for all $p\in\hyps$ that
        \[\sum_{t_{j}\leq T}\abs{u_{j}(p)}^{2}\ll y(p)^{2}T+T^{3}.\]
    \end{lemma}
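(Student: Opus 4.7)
The plan is to apply Selberg's pre-trace formula for $\hyps$ with a test function whose spectral weight dominates the indicator on $|t_{j}|\leq T$. I would pick an even non-negative $h$ satisfying $h(t)\geq 1$ for $|t|\leq T$ and super-polynomial decay beyond that scale (for concreteness $h(t)=e^{-(t/T)^{2}}$), and let $k$ be the point-pair invariant whose spherical transform equals $h$. The pre-trace formula then yields the identity
\[
\sum_{j}h(t_{j})\abs{u_{j}(p)}^{2}+\frac{1}{4\pi}\sum_{\mathfrak{a}}\int_{\reals}h(t)\abs{E_{\mathfrak{a}}(p,1+it)}^{2}\d{t}=\sum_{\gamma\in\Gamma}k(d(p,\gamma p)),
\]
and since every term on the left is non-negative, bounding the geometric side and the Eisenstein integral individually suffices to control $\sum_{|t_{j}|\leq T}\abs{u_{j}(p)}^{2}$.

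For the geometric side I would exploit that in three dimensions the Harish--Chandra inversion is elementary -- essentially a sine transform weighted by $\sinh r$ -- so that for the chosen $h$ the kernel $k$ is effectively supported in a hyperbolic ball of radius $\ll 1/T$ with peak value $k(0)\asymp T^{3}$. The identity term $\gamma=I$ contributes $O(T^{3})$, while the remaining $\gamma$ are handled by the standard ball count in $\hyps$, namely $\#\{\gamma\in\Gamma:d(p,\gamma p)\leq R\}\ll e^{2R}$, combined with the super-polynomial decay of $k$ past the scale $1/T$; these tails are absorbed into the same $O(T^{3})$.

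For the Eisenstein integral I would invoke \cite[Ch.~6, Thm.~4.10]{elstrodt1998} directly: the Fourier expansion of $E_{\mathfrak{a}}(p,1+it)$ at its cusp, conjugated to infinity, has constant term of order $\asymp y(p)$, so $\abs{E_{\mathfrak{a}}(p,1+it)}^{2}\ll y(p)^{2}$ uniformly in $t$ (with the Bessel-function tail of lower order), and $\int h(t)\d{t}\ll T$. This produces the $y(p)^{2}T$ contribution on the spectral side.

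The principal obstacle is the remark preceding the lemma statement: Elstrodt--Grunewald--Mennicke handle only the Eisenstein piece, whereas the claim concerns the cuspidal spectrum as well. This is resolved conceptually rather than computationally: because every cuspidal term $h(t_{j})\abs{u_{j}(p)}^{2}$ in the pre-trace identity is non-negative, the upper bound derived for the geometric side automatically majorises the cuspidal contribution once the Eisenstein integral is moved to the right-hand side. In the cocompact regime of the paper neither cusps nor continuous spectrum appear and the bound collapses to the expected $O(T^{3})$, with $y(p)$ bounded after shifting $p$ by an element of $\Gamma$ into a fundamental domain.
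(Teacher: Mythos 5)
The paper does not actually prove this lemma: it cites \cite{elstrodt1998} (Chapter~6, Theorem~4.10, which treats the Eisenstein contribution) and simply asserts that the argument extends to the cuspidal part. Your pre-trace-formula argument with a Gaussian spectral weight is the standard way to carry out exactly that extension, and in the cocompact setting in which the lemma is actually applied it is sound: there is no continuous spectrum, the orbit count $\#\{\gamma : d(p,\gamma p)\le R\}\ll e^{2R}$ is uniform in $p$ over a fundamental domain, and positivity of each term $h(t_{j})\abs{u_{j}(p)}^{2}$ lets you drop everything but the cuspidal sum. So your proof is correct where it matters and is, in spirit, the proof the authors have in mind.

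Two points in your accounting deserve correction if one wants the stated bound $\ll y(p)^{2}T+T^{3}$ uniformly for a cofinite group. First, you cannot simultaneously have the geometric side be $O(T^{3})$ uniformly in $p$ and the Eisenstein integral be of size $y(p)^{2}T$: both sit on opposite sides of an identity whose spectral side is a sum of non-negative terms, so the geometric side must itself carry the $y(p)^{2}T$. It does --- but not through the count $\ll e^{2R}$, whose implied constant degenerates in the cusp; rather, the near-diagonal parabolic translates (of which there are $\asymp y(p)^{2}e^{R}$ within distance $R$ high in the cusp) contribute $\ll y(p)^{2}T$ after pairing with the kernel decay $k(d)\ll T^{3}e^{-T^{2}d^{2}}$. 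Second, the claimed pointwise bound $\abs{E_{\mathfrak{a}}(p,1+it)}\ll y(p)$ uniformly in $t$ is not a triviality: only the constant term is obviously $\ll y(p)$ (using $\abs{\varphi(1+it)}=1$), while the $K$-Bessel tail is not of lower order uniformly in $t$ in the transition range; the clean route is again to bound the full spectral side by the geometric side rather than to estimate $E_{\mathfrak{a}}$ pointwise. Neither issue affects the cocompact case used in the paper.
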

    We now have all the ingredients to prove our main theorem.
    \begin{proof}[Proof of Theorem~\ref{thm1}]
        First, write the spectral expansion of $A(f^{+})(p)$:
        \begin{align*}
            A(f^{+})(p) &= \sum_{j}2c(f^{+},t_{j})\period_{j}u_{j}(p)\\
            &= X^{2}\period_{0}u_{0}+\sum_{s_{j}\in[1,2)}2\period_{j}u_{j}(p)\left(\frac{2^{s_{j}-2}X^{s_{j}}}{s_{j}}+\frac{2^{-s_{j}}X^{2-s_{j}}}{2-s_{j}}+O(\delta X^{s_{j}})\right)\\
            &\phantom{=} +\sum_{t_{j}\in\reals}2c(f^{+},t_{j})\period_{j}u_{j}(p).
        \end{align*}
        Now the summation over $s_{j}\in[1,2)$ is finite, so
        \[A(f^{+})(p)=\sum_{s_{j}\in(1,2]}\frac{2^{s_{j}-1}X^{s_{j}}}{s_{j}}\period_{j}u_{j}(p)+G(f^{+},p)+O(X+\delta X^{2}),\]
        where
        \[G(f^{+},p) = \sum_{0\neq t_{j}}2c(f^{+},t_{j})\period_{j}u_{j}(p).\]
        Again, by the discreteness of the spectrum we can estimate the contribution of small~$t_{j}$'s
        \[G(f^{+},p) = \sum_{\abs{t_{j}}\geq 1}2c(f^{+},t_{j})\period_{j}u_{j}(p)+O(X).\]
        Now, since $c(f^{+},t)$ is even in $t$, we get by a dyadic decomposition
        \begin{align*}\label{eq:dyaestimate}
            \sum_{\abs{t_{j}}\geq 1}2c(f^{+},t_{j})\period_{j}u_{j}(p) &\ll \sum_{t_{j}\geq 1}c(f^{+},t_{j})\period_{j}u_{j}(p)\notag\\
            &= \sum_{n=0}^{\infty}\left(\sum_{2^{n}\leq t_{j}< 2^{n+1}}c(f^{+},t_{j})\period_{j}u_{j}(p)\right)\notag\\
            &\ll \sum_{n=0}^{\infty}\sup_{2^{n}\leq t_{j}<2^{n+1}}c(f^{+},t_{j})\left(\sum_{2^{n}\leq t_{j}<2^{n+1}}\period_{j}u_{j}(p)\right).
        \end{align*}
        By the Cauchy--Schwarz inequality and Lemmas~\ref{lem:period}~and~\ref{lem:cusp} we have
        \begin{align*}
            G(f^{+},p) &\ll \sum_{n=0}^{\infty}\sup_{2^{n}\leq t_{j}<2^{n+1}}c(f^{+},t_{j})\left(\sum_{t_{j}<2^{n+1}}\abs{\period_{j}}^{2}\right)^{1/2}\left(\sum_{t_{j}<2^{n+1}}\abs{u_{j}(p)}^{2}\right)^{1/2}+X\\
            &\ll \sum_{n=0}^{\infty}\sup_{2^{n}\leq t_{j}<2^{n+1}}c(f^{+},t_{j})2^{2n+2}+X.\\
            \intertext{We separate the sum over $n$ depending on whether $t_{j}\delta\leq 1$ or $t_{j}\delta\geq 1$,}
            G(f^{+},p)&\ll \sum_{n<\log_{2}\delta^{-1}}2^{2n+2}\sup_{2^{n}\leq t_{j}<2^{n+1}}c(f^{+},t_{j})+\sum_{n>\log_{2}\delta^{-1}}2^{2n+2}\sup_{2^{n}\leq t_{j}<2^{n+1}}c(f^{+},t_{j})+X.\\
            \intertext{Hence, by Proposition~\ref{prop:estimates},}
            G(f^{+},p)&\ll \sum_{n<\log_{2}\delta^{-1}}2^{2n+2}X2^{-n}+\sum_{n>\log_{2}\delta^{-1}}2^{2n+2}X\delta^{-2}2^{-3n}+X\ll X\delta^{-1}+X.
        \end{align*}
        Putting all this together we find that
        \begin{equation}\label{eq:finalestimate}
            A(f^{+})(p) = \sum_{s_{j}\in(1,2]}\frac{2^{s_{j}-1}X^{s_{j}}}{s_{j}}\period_{j}u_{j}(p)+O(X+\delta X^{2}+\delta^{-1} X).
        \end{equation}
        The optimal choice for $\delta$ comes from equating $\delta X^{2}=\delta^{-1} X$, which gives $\delta=X^{-1/2}$. The result follows from noting
        that $u_{0}=\vol(\hypmodg)^{-1/2}$ and $\period_{0}=\vol(S)\vol(\hypmodg)^{-1/2}$.
    \end{proof}

    \section{Applications of the Large Sieve}
    We will now apply Chamizo's large sieve inequalities to show that the mean square of the error term $\error[X][p]$ satifies the
    conjectured bound $O(X^{1+\epsilon})$ over a spatial average.
    In the radial aspect Chamizo proves large sieve inequalities with exponential weights for all moments in two dimensions.
    We extend his result to three dimensions for the second moment. We can only prove a mean square estimate of
    $O(X^{2+2/3})$ in the radial average. This translates to an improvement of $1/6$ compared to the pointwise
    bound we obtained in Section~\ref{sec:spec}.
    More specifically, our aim is to prove the following two theorems.
    \begin{thm}\label{thm:two}
        Let $X>2$ and $X_{1},\ldots,X_{R}\in[X,2X]$ such that $\abs{X_{k}-X_{l}}>\epsilon>0$ for all $k\neq l$.
        Suppose $R\epsilon\gg X$ and $R> X^{2/3}$, then
        \begin{equation}\label{eq:disclimit1}
            \frac{1}{R}\sum_{k=1}^{R}\abs{\error[X_{k}]}^{2}\ll X^{2+2/3}\log X.
        \end{equation}
        Taking the limit $R\rightarrow\infty$ gives
        \begin{equation}\label{eq:intlimit1}
            \frac{1}{X}\int_{X}^{2X}\abs{\error[x]}^{2}\d{x}\ll X^{2+2/3}\log X.
        \end{equation}
    \end{thm}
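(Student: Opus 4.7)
The strategy is to cast $\error[X_k][p]$ spectrally and then bound the resulting oscillatory sum in $X_k$ by a three-dimensional analogue of Chamizo's radial large sieve. Sandwiching $N(p,X_k)$ between $A(f^\pm)(p)$ as in Section~\ref{sec:spec}, and using the oscillatory form of $c(f^+,t)$ from Proposition~\ref{prop:estimates}(\ref{prop:estimates1}), we may write
\begin{equation*}
    \error[X_k][p] = 2X_k\sum_{t_j \neq 0}\bigl(a(t_j,\delta)X_k^{it_j} + b(t_j,\delta)X_k^{-it_j}\bigr)\period_j u_j(p) + O(X + \delta X^2 + \delta^{-1} X),
\end{equation*}
for a smoothing parameter $\delta$ to be chosen later. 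Squaring and averaging over $k$, the problem reduces to estimating a sum of the shape $\sum_{k=1}^{R}\bigl|\sum_j \alpha_j X_k^{it_j}\bigr|^2$ with $\alpha_j = a(t_j,\delta)\period_j u_j(p)$, together with its companion involving $b$.

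The first key step is to establish a three-dimensional radial large sieve inequality, in the spirit of Chamizo~\cite{chamizo21996}: for well-spaced points $\{X_k\} \subset [X,2X]$ with separation $\epsilon$ and any spectral coefficients $\{\alpha_j\}$ supported on $t_j \leq T$,
\begin{equation*}
    \sum_{k=1}^{R}\Biggl|\sum_{t_j \leq T}\alpha_j X_k^{it_j}\Biggr|^2 \ll \bigl(T^3 + XR\epsilon^{-1}\bigr)\log(2T)\sum_{t_j \leq T}|\alpha_j|^2.
\end{equation*}
This would be proved by duality and by bounding the kernel $\sum_{t_j \leq T}X_k^{it_j}\overline{X_l^{it_j}}$ through the Selberg transform and the spherical functions on $\hyps$; the local Weyl law (Lemma~\ref{lem:cusp}) enters via the diagonal and is directly responsible for the exponent $T^3$ in place of Chamizo's $T^2$. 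It is precisely this weaker exponent that will limit the resulting average to $X^{8/3}\log X$ rather than the conjectural $X^{2+\epsilon}$.

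The second step is a dyadic decomposition of the spectral sum into blocks $2^n \leq t_j < 2^{n+1}$ and an application of the three-dimensional large sieve to each block. By Proposition~\ref{prop:estimates}(\ref{prop:estimates1}), $|a(t,\delta)|,\,|b(t,\delta)| \ll \min(|t|^{-1}, |t|^{-3}\delta^{-2})$, so the sup of these factors can be extracted from each block, leaving $\sum_{t_j \sim T}|\period_j u_j(p)|^2$, which is controlled by combining Lemma~\ref{lem:period} with a pointwise bound on $u_j$ derived from Lemma~\ref{lem:cusp}. Splitting the dyadic sum at the critical scale $T \sim \delta^{-1}$, summing two geometric series, and using the hypotheses $R\epsilon \gg X$ and $R > X^{2/3}$ to decide which of $T^3$ or $XR\epsilon^{-1}$ dominates in each range, one arrives at a bound of shape $X^{8/3}\log X$ upon optimising $\delta$. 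Some care is needed with the $O(\delta X^2)$ contribution coming from the approximation of $c(f^+,\cdot)$ on the small discrete part of the spectrum; it is cleanest to extract the main term $M(p,X)$ exactly and feed only the genuinely oscillatory tail into the large sieve.

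The integral bound~\eqref{eq:intlimit1} follows from the discrete bound~\eqref{eq:disclimit1} by a routine limiting argument: partition $[X,2X]$ into $R$ equally spaced sample points with $\epsilon = X/R$, apply the discrete inequality, and let $R \to \infty$, using that $N(p,\,\cdot\,)$ is a step function so that the Riemann sum converges to the integral. The main obstacle is the three-dimensional radial large sieve itself; once the $T^3 + XR\epsilon^{-1}$ trade-off is established, the remaining computations are mechanical, and the structural $1/6$ loss arising from the increase $T^2 \to T^3$ in the spectral density becomes transparent.
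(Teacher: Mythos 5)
Your overall strategy is the one the paper follows: sandwich $N(p,X)$ between $A(f^{\pm})$, use the oscillatory form of $c(f^{\pm},t)$ from Proposition~\ref{prop:estimates}, decompose the spectral sum dyadically, feed each block into a three-dimensional radial large sieve, and optimise the smoothing parameter (the paper takes $\delta=X^{-2/3}$, splits the spectrum into $|t_j|\le 1$, $1<|t_j|\le\delta^{-3}$ and a tail, and deduces the theorem from Proposition~\ref{prop:one} by setting $\epsilon\asymp R^{-1}X$). The gap is in the key new ingredient. The inequality that is actually provable (Theorem~\ref{thm:chamizo2}) keeps $u_j(p)$ \emph{inside} the bilinear form,
\[\sum_{k=1}^{R}\Bigl|\sum_{|t_j|\le T}a_jx_k^{it_j}u_j(p)\Bigr|^2\ll\bigl(T^3+XT^2\epsilon^{-1}\bigr)\sum_{|t_j|\le T}|a_j|^2,\]
so that after duality the relevant kernel is $\sum_je^{-t_j^2/(4T^2)}\cos(r_{kl}t_j)\,|u_j(p)|^2$, which is the diagonal of an automorphic kernel and can be bounded via the inverse Selberg transform of a Gaussian (Lemma~\ref{lem:selberg}) together with hyperbolic lattice point counting. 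In your formulation $u_j(p)$ is absorbed into $\alpha_j$, so the kernel becomes the unweighted sum $\sum_{t_j\le T}(X_k/X_l)^{it_j}$, and the Selberg-transform/spherical-function argument you invoke no longer applies; you would need an unweighted Weyl law with error terms instead. Your sieve constant $XR\epsilon^{-1}$ with an extra $\log(2T)$ also does not match the $XT^2\epsilon^{-1}$ that the off-diagonal analysis delivers.

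This is not merely cosmetic: your proposed treatment of the coefficients loses a power of $T$. Bounding $\sum_{t_j\sim T}|\period_ju_j(p)|^2$ by combining Lemma~\ref{lem:period} with a pointwise bound on $u_j$ from Lemma~\ref{lem:cusp} gives at best $\sup_{t_j\sim T}|u_j(p)|^2\cdot\sum_{t_j\sim T}|\period_j|^2\ll T^3\cdot T=T^4$, whereas the correct argument never multiplies the two average bounds together: the $|u_j(p)|^2$ contribute the $T^3$ in the sieve constant, and Lemma~\ref{lem:period} separately gives $\|a\|_\ast^2\ll X^2\min(T^{-1},T^{-5}\delta^{-4})$ for the coefficients $a_j=2Xa(t_j,\delta)\period_j$. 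Running your numbers with the $T^4$ loss does not recover $X^{2+2/3}\log X$. Finally, you should specify where the spectral sum is truncated and why the tail is harmless: the paper cuts at $|t_j|\le\delta^{-3}$ and shows the tail is $O(X\delta)$ using the decay $|t|^{-3}\delta^{-2}$; without such a truncation the dyadic sum you feed into Cauchy--Schwarz has infinitely many blocks.
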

    For $p,q\in\hypmodg$, let
    \[\tilde{d}(p,q)=\inf_{\gamma\in\Gamma}d(p,\gamma q),\]
    be the induced distance on $\hypmodg$.
    \begin{thm}\label{thm:three}
        Let $X>2$ and $p_{1},\ldots,p_{R}\in\hypmodg$ with $\tilde{d}(p_{k},p_{l})>\epsilon>0$ for all $k\neq l$.
        Suppose $R\epsilon^{3}\gg 1$ and $R>X$, then
        \begin{equation}\label{eq:disclimit2}
            \frac{1}{R}\sum_{k=1}^{R}\abs{\error[X][p_{k}]}^{2}\ll X^{2}\log^{2}X.
        \end{equation}
        Taking the limit as $R\tendsto\infty$ gives
        \begin{equation}\label{eq:intlimit2}
            \int_{\hypmodg}\abs{\error[X][p]}^{2}\d{\mu(p)}\ll X^{2}\log^{2}X.
        \end{equation}
    \end{thm}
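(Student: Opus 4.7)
The plan is to combine the spectral setup of Section~\ref{sec:spec} with a three-dimensional analogue of Chamizo's spatial large sieve. From the sandwich $A(f^-)(p) \leq N(p, X) \leq A(f^+)(p)$ and the main-term extraction carried out in the proof of Theorem~\ref{thm1},
\[|E(p, X)| \ll |G(f^\pm, p)| + X + \delta X^2,\]
where $G(f^\pm, p) = \sum_{t_j \neq 0} 2c(f^\pm, t_j)\period_j u_j(p)$ is the oscillatory part of the spectral expansion. The target bound $X^2 \log^2 X$ for the mean square thus reduces to estimating the $L^2$-mass of $G(f^\pm, \cdot)$ over $\hypmodg$, together with the contribution of the smoothing error, which is controlled as soon as $\delta X^2 \ll X \log X$.

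The key ingredient will be a three-dimensional spatial large sieve of the form
\[\sum_{k=1}^R \Bigl|\sum_{t_j \leq T} a_j u_j(p_k)\Bigr|^2 \ll (R + T^3)(\log T)^{c} \sum_{t_j \leq T} |a_j|^2,\]
valid for points $p_1,\ldots,p_R \in \hypmodg$ with pairwise induced distance $>\epsilon$ and $R\epsilon^3 \gg 1$. This is the natural 3D analogue of the inequality Chamizo uses to prove~\eqref{eq:chamizoresult2}, obtained by the same duality argument from a suitable automorphic point-pair kernel localised simultaneously in the spectral and spatial variables; the exponent $T^3$ reflects the three-dimensional local Weyl law of Lemma~\ref{lem:cusp}. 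I would then apply this sieve to a dyadic decomposition of $G(f^\pm, p_k)$ with coefficients $a_j = 2c(f^\pm, t_j)\period_j$. Using $|c(f^\pm, t)| \ll \min(|t|^{-1}, |t|^{-3}\delta^{-2})X$ from Proposition~\ref{prop:estimates}(i) and $\sum_{|t_j| \leq T}|\period_j|^2 \ll T$ from Lemma~\ref{lem:period}, the dyadic block $2^n \leq t_j < 2^{n+1}$ contributes $\ll (R + 2^{3n})\min(2^{-n}, 2^{-5n}\delta^{-4}) X^2 (\log X)^{c}$.

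Splitting the dyadic sum at $n \asymp \log_2 \delta^{-1}$, summing the resulting geometric series, combining with the smoothing error $R(X + \delta X^2)^2$, and choosing $\delta \asymp X^{-1}$ together with the hypothesis $R > X$ should yield~\eqref{eq:disclimit2}. The integrated form~\eqref{eq:intlimit2} then follows by letting $R \to \infty$ along sequences of well-spaced points becoming uniformly distributed in $\hypmodg$, or alternatively, more directly, by applying Parseval to $G(f^\pm, p)$ (which by itself already delivers the integrated version without the logarithmic factor). The principal obstacle is establishing the 3D spatial large sieve with the correct constants: although the duality framework is standard, the kernel construction must be adapted to three-dimensional hyperbolic geometry with the correct volume bounds on small balls and the correct $T^3$ spectral-count factor matching Lemma~\ref{lem:cusp}. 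Once this is in place, the remainder of the argument is a routine parameter-balancing exercise in the spirit of~\cite[§6]{chatzakos2015}.
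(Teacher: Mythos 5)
Your overall strategy for the discrete average \eqref{eq:disclimit2} coincides with the paper's: sandwich $N(p,X)$ between $A(f^{\pm})(p)$, isolate the oscillatory part, decompose dyadically, feed the coefficient bounds of Proposition~\ref{prop:estimates} and Lemma~\ref{lem:period} into a spatial large sieve of strength $T^{3}+\epsilon^{-3}$, and optimise $\delta$. Two points of divergence. First, the ``principal obstacle'' you single out is not an obstacle here: the spatial large sieve you need is precisely Theorem~3.2 of Chamizo's paper, already proved in $n$ dimensions for cocompact groups, and the paper simply quotes it as Theorem~\ref{thm:chamizo1}; it is only the \emph{radial} sieve that has to be generalised to three dimensions in this work. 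Moreover Chamizo's inequality carries no $(\log T)^{c}$ factor; the paper manufactures its $\log^{2}X$ from a weighted Cauchy--Schwarz over the dyadic blocks (the weights $c_{T}=\min(\abs{\log T\delta^{3}}+1,\log T)$ in the proof of Proposition~\ref{prop:two}). Second, and more seriously, your parameter choice $\delta\asymp X^{-1}$ does not close the argument under the hypothesis $R>X$: the $T^{3}$ part of the sieve evaluated at $T\asymp\delta^{-1}$, where $\norm{a}_{\ast}^{2}\asymp X^{2}T^{-1}$, contributes $\asymp X^{2}\delta^{-2}=X^{4}$ to $\sum_{k}\abs{\error[X][p_{k}]}^{2}$, and dividing by $R>X$ only yields $X^{3}$, not $X^{2}\log^{2}X$. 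The paper instead balances $X^{2}\delta^{-2}$ against the smoothing term $RX^{4}\delta^{2}$ by taking $\delta=R^{-1/4}X^{-1/2}$ and records the outcome as Proposition~\ref{prop:two}. Be warned that this step is delicate even in the paper: the bound reached at the end of the paper's proof, $X^{3}R^{1/2}\log^{2}X+X^{2}\epsilon^{-3}$, does not visibly match the $X^{4}R^{-1}\log^{2}X+X^{2}\epsilon^{-3}$ stated in the proposition and used to deduce the theorem, so you should carry out this optimisation carefully rather than taking either exponent on faith.

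Your remark that the integrated bound \eqref{eq:intlimit2} follows directly from Parseval is correct and is a genuinely different, cleaner route than the paper's (which obtains \eqref{eq:intlimit2} as a limit of the discrete average over well-spaced centres of small balls): Parseval together with $\abs{c(f,t)}\ll X\min(\abs{t}^{-1},\abs{t}^{-3}\delta^{-2})$ and $\sum_{\abs{t_{j}}\leq T}\abs{\period_{j}}^{2}\ll T$ gives $\int_{\hypmodg}\abs{G(f^{\pm},p)}^{2}\d{\mu(p)}\ll X^{2}$ already with $\delta=X^{-1}$, with no sieve and no logarithms. If the integrated statement is all you need, that is the argument to write down; the sieve machinery is only required for the finite, well-spaced averages.
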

    We split the rest of this section into three parts: one for each of the averages after which we prove the generalisation of
    the radial large sieve inequality that is used for Theorem~\ref{thm:two}.

    \subsection{Radial Average}\label{sec:rad}
    We will prove the following proposition.
    \begin{prop}\label{prop:one}
        Let $X>2$ and $X_{1},\ldots,X_{R}\in[X,2X]$ such that $\abs{X_{k}-X_{l}}\geq \epsilon>0$ for all $k\neq l$. Then, we have
        \begin{equation}\label{eq:finalerror}
            \sum_{k=1}^{R}\abs{\error[X_{k}]}^{2} \ll  X^{3+2/3}\epsilon^{-1}\log X+X^{2+2/3}R+X^{3+1/3}\log X.
        \end{equation}
    \end{prop}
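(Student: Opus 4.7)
The plan is to adapt the Chamizo--Chatzakos--Petridis strategy to the sector counting function by combining the spectral expansion provided by Lemma~\ref{lem:specexp} and Proposition~\ref{prop:estimates} with a three-dimensional analogue of Chamizo's radial large sieve inequality (the latter being the subject of the subsection that follows). Starting from the sandwich $A(f^{-})(p)\le N(p,X_{k})\le A(f^{+})(p)$, subtracting $M(p,X_{k})$, and absorbing the contribution of the small eigenvalues $s_{j}\in(1,2]$ via Proposition~\ref{prop:estimates}(\ref{prop:estimates2}), we arrive at
\[
    \error[X_k] = \sum_{t_{j}\in\reals\setminus\{0\}}2c(f^{\pm},t_{j})\period_{j}u_{j}(p) + O(\delta X^{2}+X),
\]
whose smoothing error alone, after squaring and summing over $k$, contributes $O(R\delta^{2}X^{4}+RX^{2})$ to the desired mean square.

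The next step is to truncate the spectral sum at $|t_{j}|=T$ and handle the tail pointwise. For $|t_{j}|>T$, the same dyadic Cauchy--Schwarz argument as in the proof of Theorem~\ref{thm1}, using Lemmas~\ref{lem:period} and~\ref{lem:cusp} together with the bound $c(f^{\pm},t)\ll\min(|t|^{-1},|t|^{-3}\delta^{-2})$ from Proposition~\ref{prop:estimates}(\ref{prop:estimates1}), gives, provided $T\gg\delta^{-1}$, a pointwise bound of the shape $X/(\delta^{2}T)$, whence $\sum_{k}|\text{tail}|^{2}\ll RX^{2}/(\delta^{4}T^{2})$.

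For the truncated sum I would expand $c(f^{\pm},t_{j})=a(t_{j},\delta)X_{k}^{1+it_{j}}+b(t_{j},\delta)X_{k}^{1-it_{j}}$ via Proposition~\ref{prop:estimates}(\ref{prop:estimates1}), reducing the problem to estimating two sums of the shape
\[
    X^{2}\sum_{k=1}^{R}\biggl|\sum_{0<|t_{j}|\le T}a(t_{j},\delta)\period_{j}u_{j}(p)X_{k}^{\pm it_{j}}\biggr|^{2}.
\]
To these I apply the three-dimensional radial large sieve for $\epsilon$-spaced $X_{k}\in[X,2X]$; schematically its output is of the form $(R+X\epsilon^{-1})\log X$ times the diagonal $\sum_{0<|t_{j}|\le T}|a(t_{j},\delta)|^{2}|\period_{j}u_{j}(p)|^{2}$. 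The diagonal is estimated dyadically: using $|a(t,\delta)|^{2}\ll\min(|t|^{-2},|t|^{-6}\delta^{-4})$ together with the same blockwise Cauchy--Schwarz step (via Lemmas~\ref{lem:period} and~\ref{lem:cusp}) that bounds $G(f^{+},p)$ in the proof of Theorem~\ref{thm1}, this diagonal collapses to $O(\delta^{-1})$ up to a logarithmic factor.

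Collecting the three pieces, choosing $T\asymp X$ (so that $T>\delta^{-1}$ throughout the range of interest) and optimising $\delta=X^{-2/3}$ to balance the large-sieve contribution $RX^{2}\delta^{-1}$ against the smoothing error $R\delta^{2}X^{4}$ produces the two dominant terms $X^{11/3}\epsilon^{-1}\log X$ and $RX^{8/3}$ in~\eqref{eq:finalerror}; the residual $X^{10/3}\log X$ should emerge from the careful bookkeeping of the logarithmic factors that the radial large sieve and the tail bound contribute when $R$ is not too large. The principal obstacle is the radial large sieve inequality in three dimensions itself: this is the new ingredient compared with~\cite{chatzakos2015,chamizo21996}, its proof is deferred to the end of the section, and the exact power of $\log X$ in its constant is what governs whether the middle term comes out with or without an additional logarithm.
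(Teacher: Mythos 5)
Your overall architecture (sandwiching with $f^{\pm}$, smoothing error $O(RX^{4}\delta^{2})$, pointwise disposal of the spectral tail by the dyadic Cauchy--Schwarz argument, dyadic decomposition of the middle range fed into a radial large sieve, optimisation at $\delta=X^{-2/3}$) is the paper's, but the central step is invoked in a form that is not the one available, and its diagonal cannot be estimated with the tools at hand. The three-dimensional radial large sieve actually proved (Theorem~\ref{thm:chamizo2}) reads
\[
\sum_{k=1}^{R}\abs[\Big]{\sum_{\abs{t_{j}}\leq T}a_{j}x_{k}^{it_{j}}u_{j}(p)}^{2}\ll\bigl(T^{3}+XT^{2}\epsilon^{-1}\bigr)\sum_{\abs{t_{j}}\leq T}\abs{a_{j}}^{2},
\]
with $u_{j}(p)$ \emph{inside} the sieve and the norm on the right involving only the coefficients $a_{j}=2c(f,t_{j})\period_{j}$. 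Your version --- a factor $(R+X\epsilon^{-1})\log X$ times $\sum_{j}\abs{a(t_{j},\delta)}^{2}\abs{\period_{j}u_{j}(p)}^{2}$ --- is the dual shape obtained by expanding the square in $k$ and keeping $j=j'$; controlling the off-diagonal $j\neq j'$ in that formulation would require spacing information on the spectral parameters $t_{j}$, which does not exist. Chamizo's route avoids this by duality: one bounds $\sum_{j}\abs{\sum_{k}b_{k}x_{k}^{it_{j}}u_{j}(p)}^{2}$, recognises the resulting $S_{kl}$ as the diagonal of an automorphic kernel, and the $T^{3}$ in the sieve is precisely the local Weyl law absorbing $\abs{u_{j}(p)}^{2}$.

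Two consequences. First, your diagonal $\sum_{j}\abs{a(t_{j},\delta)}^{2}\abs{\period_{j}u_{j}(p)}^{2}$ does not ``collapse to $O(\delta^{-1})$ by the same blockwise Cauchy--Schwarz step'': that step bounds the linear sum $\sum\period_{j}u_{j}(p)$ by $(\sum\abs{\period_{j}}^{2})^{1/2}(\sum\abs{u_{j}(p)}^{2})^{1/2}$, whereas here you need the quadratic sum $\sum\abs{\period_{j}}^{2}\abs{u_{j}(p)}^{2}$, and Lemmas~\ref{lem:period} and~\ref{lem:cusp} give no handle on that product. With the sieve in its correct form the diagonal is simply $\norm{a}_{\ast}^{2}\ll X^{2}\min(T^{-1},T^{-5}\delta^{-4})$ per dyadic block, by Lemma~\ref{lem:period} alone. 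Second, the term $X^{3+1/3}\log X$ is not a residue of logarithmic bookkeeping: it is the genuine contribution $T^{3}\cdot X^{2}T^{-1}=X^{2}T^{2}$ of the sieve's diagonal, summed dyadically up to $T=\delta^{-1}$, giving $X^{2}\delta^{-2}\log X=X^{10/3}\log X$; likewise the middle term $RX^{2+2/3}$ comes from the smoothing error $RX^{4}\delta^{2}$, not from the sieve. (Your truncation at $T\asymp X$ rather than at $\delta^{-3}=X^{2}$ happens to be harmless, since the resulting tail $RX^{2}\delta^{-4}T^{-2}$ is again $O(RX^{8/3})$.)
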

    Theorem~\ref{thm:two} follows immediately from the above proposition.
    \begin{proof}[Proof of Theorem~\ref{thm:two}]
        We take $\epsilon\asymp R^{-1}X$.
        Hence the bound~\eqref{eq:finalerror} becomes
        \[\sum_{k=1}^{R}\abs{\error[X_{k}]}^{2}\ll X^{2+2/3}R\log X+X^{2+2/3}R+X^{3+1/3}\log X.\]
        So if we choose $R>X^{2/3}$ then
        \[\frac{1}{R}\sum_{k=1}^{R}\abs{\error[X_{k}]}^{2}\ll X^{2+2/3}\log X.\]
        This proves~\eqref{eq:disclimit1}. For the integral limit~\eqref{eq:intlimit1} it suffices to consider a limiting partition of $[X,2X]$ with equally spaced points.
    \end{proof}
    The large sieve inequality for radial averaging is given by the following theorem.
    \begin{thm}\label{thm:chamizo2}
        Given $p\in\hypmodg$, suppose that $X>1$ and $T>1$. Let $x_{1},\ldots,x_{R}\in [X,2X]$. If $\abs{x_{k}-x_{l}}>\epsilon>0$ for all $k\neq l$, then
        \begin{equation}\label{eq:chamizo2}
            \sum_{k=1}^{R}\abs[\bigg]{\sum_{\abs{t_{j}}\leq T}a_{j}x^{it_{j}}_{k}u_{j}(p)}^{2}\ll (T^{3}+XT^{2}\epsilon^{-1})\norm{a}^{2}_{\ast},
        \end{equation}
        where
        \begin{equation}\label{eq:aeq}
            \norm{a}^{2}_{\ast}=\sum_{\abs{t_{j}}\leq T}\abs{a_{j}}^{2}.
        \end{equation}
    \end{thm}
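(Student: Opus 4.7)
The plan is to adapt Chamizo's two-dimensional radial large sieve to three dimensions, substituting the local Weyl law from Lemma~\ref{lem:cusp} for its two-dimensional analogue. I would first apply Gallagher's inequality: for $\epsilon$-separated points $\{x_k\}\subset[X,2X]$ and a $C^1$ function $S$,
\[
\sum_{k=1}^R |S(x_k)|^2 \ll \epsilon^{-1}\int_X^{2X} |S(x)|^2\d{x} + \biggl(\int_X^{2X}|S(x)|^2\d{x}\biggr)^{1/2}\biggl(\int_X^{2X}|S'(x)|^2\d{x}\biggr)^{1/2},
\]
with $S(x)=\sum_{|t_j|\leq T}a_j u_j(p)x^{it_j}$. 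Since $xS'(x)$ has the same shape as $S$ with coefficients bounded by $T|a_j u_j(p)|$, the cross term only contributes an additional factor of order $T/X$ relative to $\int|S|^2$, so it suffices to control the mean square alone.

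Expanding this mean square and using $\int_X^{2X}x^{i(t_j-t_l)}\d{x}=O(X/(1+|t_j-t_l|))$ splits $\int|S|^2$ into a diagonal $X\sum_{|t_j|\leq T}|a_j|^2|u_j(p)|^2$ plus off-diagonal terms. An AM-GM combined with the spectral-density bound $\sum_{l\neq j,\,|t_l|\leq T}(1+|t_j-t_l|)^{-1}\ll T^2\log T$, itself a consequence of Lemma~\ref{lem:cusp}, absorbs the off-diagonals into the diagonal up to a logarithmic factor. What remains is to control the weighted sum $\sum_{|t_j|\leq T}|a_j|^2|u_j(p)|^2$ by an appropriate multiple of $\norm{a}^{2}_{\ast}$.

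This last step is the main obstacle, since a pointwise bound on $|u_j(p)|$ would be too wasteful. I would argue by duality, rewriting the inequality as
\[
\sum_{|t_j|\leq T}|u_j(p)|^2|F(t_j)|^2 \ll (T^3+XT^2\epsilon^{-1})\sum_k|b_k|^2,\qquad F(t)=\sum_k b_k x_k^{-it},
\]
and use a Bernstein/Sobolev-type inequality to dominate $|F(t_j)|^2$ by a short average of $|F|^2+|F'|^2$. Combining the continuous large sieve $\int_{-T}^{T}|F|^2\d{t}\ll(T+X/\epsilon)\sum_k|b_k|^2$ with the total Weyl mass $\sum_{|t_j|\leq T}|u_j(p)|^2\ll T^3$ from Lemma~\ref{lem:cusp} then produces the two terms in the bound: $T^3$ coming from the total spectral mass and $XT^2\epsilon^{-1}$ from the radial aspect coupled to the local Weyl density $\sum_{|t_l|\in[T',T'+1]}|u_l(p)|^2\ll T'^2$. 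Careful bookkeeping is required in this coupling to avoid losing a spurious factor of $T$.
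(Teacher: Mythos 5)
Your proposal splits into two distinct strategies, and only the second one can work. The first --- Gallagher's inequality in the $x$-variable followed by expanding $\int_X^{2X}\abs{S(x)}^2\d{x}$ and treating the off-diagonal by AM--GM --- fails as described: with $\abs{a_j\overline{a}_lu_j(p)\overline{u}_l(p)}\leq\frac12(\abs{a_ju_j(p)}^2+\abs{a_lu_l(p)}^2)$ and your density bound $\sum_{l\neq j}(1+\abs{t_j-t_l})^{-1}\ll T^2\log T$, the off-diagonal is bounded by $XT^2\log T\sum_j\abs{a_j}^2\abs{u_j(p)}^2$, i.e.\ it is $T^2\log T$ times the diagonal, not ``the diagonal up to a logarithmic factor''. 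Since an individual $\abs{u_j(p)}^2$ can be as large as $T^2$ consistently with Lemma~\ref{lem:cusp}, the best conclusion of this route is $\epsilon^{-1}XT^4\log T\norm{a}_{\ast}^2$, a loss of $T^2\log T$. The underlying obstruction is that $\log x$ varies only over an interval of length $\log 2$, so the $\sim T^2$ eigenvalues $t_j$ in any unit window are not separated at all by the characters $x^{it_j}$; one must keep the $u_j(p)$ grouped and apply Cauchy--Schwarz inside each unit window together with the windowed Weyl mass $\sum_{\abs{t_j-t}\leq 1}\abs{u_j(p)}^2\ll T^2$, and the AM--GM step destroys exactly this structure.

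Your second strategy --- dualising the full inequality to $\sum_{\abs{t_j}\leq T}\abs{u_j(p)}^2\abs{F(t_j)}^2\ll(T^3+XT^2\epsilon^{-1})\sum_k\abs{b_k}^2$, replacing $\abs{F(t_j)}^2$ by a Sobolev average over $[t_j-\frac12,t_j+\frac12]$, pulling out the unit-window Weyl mass $\ll T^2$, and finishing with the mean value theorem for the Dirichlet polynomial with $\epsilon/X$-separated frequencies $\log x_k$ --- is sound and does yield the stated bound, provided you normalise to $G(t)=\sum_kb_k(x_k/X)^{-it}$ so that $\int\abs{G'}^2$ does not pick up $\log^2X$, and treat the finitely many imaginary $t_j$ separately. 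Note two things: this is a replacement for the whole proof, not a patch for the weighted sum $\sum_j\abs{a_j}^2\abs{u_j(p)}^2$ left over from your first argument; and the unit-window bound $\sum_{T'\leq\abs{t_j}\leq T'+1}\abs{u_j(p)}^2\ll T'^2$ does not follow by differencing the cumulative statement of Lemma~\ref{lem:cusp}, so it must be quoted or proved in that local form. The paper's own proof is genuinely different after the duality step: it applies Cauchy--Schwarz in $j$, recognises the bilinear sum $S_{kl}=\sum_je^{-t_j^2/4T^2}\cos(r_{kl}t_j)\abs{u_j(p)}^2$ with $r_{kl}=\abs{\log(x_k/x_l)}$ as an automorphic kernel on the diagonal, computes the inverse Selberg transform of the Gaussian (Lemma~\ref{lem:selberg}), and controls the $\gamma\neq\id$ terms by the trivial hyperbolic lattice point count before summing over $l$ with the spacing condition. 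Your dual route avoids the Selberg transform entirely at the price of importing the classical large sieve for Dirichlet polynomials; both give $(T^3+XT^2\epsilon^{-1})\norm{a}_{\ast}^2$.
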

    In two dimensions, \textcite[Theorem~2.2]{chamizo11996} proves a corresponding result to the above theorem.
    The proof in three dimensions is similar, but we write it down in Section~\ref{sec:lemmas}.
    Let $f$ be a compactly supported function with finitely many discontinuities on $[1,\infty)$, and denote
    \[\eerror[f] = A(f)(p) - \sum_{1\leq s_{j}\leq 2}c(f,t_{j})\period_{j}u_{j}(p).\]
    Then, recall we have shown that~(see~\eqref{eq:finalestimate})
    \begin{align*}
        \eerror[f^{+}] &= O(X\delta^{-1}+X),\\
        \eerror[f^{-}] &= O(X\delta^{-1}+X),
    \end{align*}
    and
    \begin{equation}\label{eq:newerror}
        \eerror[f^{-}] < \error+O(X^{2}\delta+X) < \eerror[f^{+}].
    \end{equation}
    We can now prove the proposition.
    \begin{proof}[Proof of Proposition~\ref{prop:one}]
        For simplicity, we combine the error terms in~\eqref{eq:newerror}. Suppose that
        $1>\delta\gg X^{-1}$, then
        \[\eerror[f^{-}] < \error+O(X^{2}\delta) < \eerror[f^{+}].\]
        Hence,
        \[\sum_{k=1}^{R}\abs{\error[X_{k}]}^{2}\ll \sum_{k=1}^{R}\abs{\eerror[f][X_{k}]}^{2}+RX^{4}\delta^{2},\]
        where $f$ is appropriately chosen as $f^{+}$ or $f^{-}$ depending on $k$. The main strategy is again
        to apply dyadic decomposition in the spectral expansion. We use the following notation
        for the truncated spectral expansion:
        \begin{equation}\label{eq:truncspec}
            \ssum = \sum_{T<\abs{t_{j}}\leq 2T}2c(f,t_{j})\period_{j}u_{j}(p).
        \end{equation}
        We consider three different ranges, which we choose so that the tails of the spectral expansion get
        absorbed into the error term. The correct ranges are given by
        \begin{align*}
            A_{1} &= \Set{t_{j}\given0<\abs{t_{j}}\leq 1},\\
            A_{2} &= \Set{t_{j}\given 1<\abs{t_{j}}\leq \delta^{-3}},\\
            A_{3} &= \Set{t_{j}\given \abs{t_{j}}>\delta^{-3}}.
        \end{align*}
        Also, define
        \[S_{i}=\sum_{t_{j}\in A_{i}}2c(f,t_{j})\period_{j}u_{j}(p).\]
        We can now write
        \[\eerror[f] = S_{1}+S_{2}+S_{3}.\]
        For the tail we have
        \begin{align*}
            \sum_{t_{j}\in A_{3}} 2c(f,t_{j})\period_{j}u_{j}(p) &\ll \sum_{\abs{t_{j}}\geq \delta^{-3}}c(f,t_{j})\period_{j}u_{j}(p)\\
            &\ll \sum_{t_{j}> \delta^{-3}}\min(\abs{t}^{-1},\abs{t}^{-3}\delta^{-2})X\period_{j}u_{j}(p).
        \end{align*}
        With a dyadic decomposition we get
        \begin{align*}
            \sum_{t_{j}\in A_{3}}2c(f,t_{j})\period_{j}u_{j}(p) &\ll X\delta^{-2}\sum_{n=0}^{\infty}\left(\sum_{2^{n}\delta^{-3}<t_{j}\leq 2^{n+1}\delta^{-3}}t^{-3}\period_{j}u_{j}(p)\right)\\
            &\ll X\delta^{-2}\sum_{n=0}^{\infty}\delta^{9}2^{-3n}\left(\sum_{2^{n}\delta^{-3}<t_{j}\leq 2^{n+1}\delta^{-3}}\period_{j}u_{j}(p)\right),\\
            \intertext{and then from the Cauchy--Schwarz inequality it follows that}
            &\ll X\delta^{7}\sum_{n=0}^{\infty}2^{-3n}\left(\sum_{t_{j}\leq
                    \delta^{-3}2^{n+1}}\abs{\period_{j}}^{2}\right)^{1/2}\left(\sum_{t_{j}\leq\delta^{-3}2^{n+1}}\abs{u_{j}(p)}^{2}\right)^{1/2}\\
            &\ll X\delta^{7}\sum_{n=0}^{\infty}2^{-3n}(2^{n/2}\delta^{-3/2})(2^{3n/2}\delta^{-9/2}) \ll X\delta,
        \end{align*}
        as required.
        Next, for the first interval we have
        \[ S_{1} = \sum_{t_{j}\in A_{1}}2c(f,t)\period_{j}u_{j}(p)\ll X\sum_{\abs{t_{j}}<1}\abs{t}^{-1}\period_{j}u_{j}(p)\ll X.\]
        Hence
        \[S_{1}+S_{3}=O(X^{2}\delta).\]
        Finally, we split the summation in $S_{2}$ into dyadic intervals by letting $T=2^{n}$ for $n=0, 1, \ldots, [\log_{2}\delta^{-3}]$.
        We then have
        \begin{equation}\label{eq:errorrad}
            \sum_{k=1}^{R}\abs{\eerror[f][X_{k}]}^{2}\ll\sum_{k=1}^{R}\abs*{\sum_{1\leq 2^{n}\leq \delta^{-3}}\ssum[2^{n}][X_{k}]}^{2}+O(RX^{4}\delta^{2}).
        \end{equation}
        Applying the Cauchy--Schwarz inequality to the sum over the dyadic intervals yields
        \[\abs[\bigg]{\sum_{1\leq 2^{n}<\delta^{-3}}\ssum[2^{n}][X_{k}]}^{2}\ll \log X \sum_{1\leq 2^{n}<\delta^{-3}}\abs{\ssum[2^{n}][X_{k}]}^{2}.\]
        Substituting this back into~\eqref{eq:errorrad} we get
        \begin{equation}\label{eq:errorest}
            \sum_{k=1}^{R}\abs{\eerror[f][X_{k}]}^{2}\ll\log X \sum_{1\leq 2^{n}\leq X^{-1}\delta^{-3}}\sum_{k=1}^{R}\abs{\ssum[2^{n}][X_{k}]}^{2}+O(RX^{4}\delta^{2}).
        \end{equation}
        Recall from Proposition~\ref{prop:estimates} that we can write
        \[c(f,t) = X(a(t,\delta)X^{it}+b(t,\delta)X^{-it}),\]
        where $a$ and $b$ satisfy
        \[a(t,\delta),\,b(t,\delta) = \min(\abs{t}^{-1},\abs{t}^{-3}\delta^{-2}).\]
        Keeping in mind our notation with $T=2^{n}$ and~\eqref{eq:truncspec}, we apply
        Theorem~\ref{thm:chamizo2} to get
        \begin{equation}\label{eq:sestimate2}
            \sum_{k=1}^{R}\abs{\ssum[T][X_{k}]}^{2}\ll (T^{3}+XT^{2}\epsilon^{-1})\norm{a}^{2}_{\ast},
        \end{equation}
        where
        \begin{align*}
            \norm{a}^{2}_{\ast} &\ll \sum_{T<\abs{t_{j}}\leq 2T}\abs{\min(\abs{t_{j}}^{-1},\abs{t_{j}}^{-3}\delta^{-2})X\period_{j}}^{2}\\
            &\ll X^{2}\min(T^{-2},T^{-6}\delta^{-4})\left(\sum_{T\leq \abs{t_{j}}<2T}\abs{\period_{j}}^{2}\right)\\
            &\ll X^{2}\min(T^{-1},T^{-5}\delta^{-4}).
        \end{align*}
        This simplifies~\eqref{eq:sestimate2} to
        \[\sum_{k=1}^{R}\abs{\ssum[T][X_{k}]}^{2}\ll (T^{3}+XT^{2}\epsilon^{-1})X^{2}\min(T^{-1},T^{-5}\delta^{-4}).\]
        Therefore~\eqref{eq:errorest} becomes
        \begin{align*}
            \sum_{k=1}^{R}\abs{\error[X_{k}]}^{2} 
            &\ll \log X\sum_{1\leq T<\delta^{-3}}(T^{3}+XT^{2}\epsilon^{-1})X^{2}\min(T^{-1},T^{-5}\delta^{-4})+RX^{4}\delta^{2}.
        \end{align*}
        We split the summation depending on whether $T<\delta^{-1}$ and get
        \begin{align*}
            \sum_{k=1}^{R}\abs{\error[X_{i}]}^{2} &\ll X^{2}\log X\left(\sum_{1\leq T\leq\delta^{-1}}T^{2}\right)+X^{3}\epsilon^{-1}\log X\left(\sum_{1\leq T\leq\delta^{-1}}T\right)\\
            &\phantom{\ll}+X^{2}\delta^{-4}\log X\left(\sum_{\delta^{-1}\leq T\leq \delta^{-3}}T^{-2}\right)\\
            &\phantom{\ll}+X^{3}\delta^{-4}\epsilon^{-1}\log X\left(\sum_{\delta^{-1}\leq T\leq \delta^{-3}}T^{-3}\right)+RX^{4}\delta^{2}.
        \end{align*}
        With trivial estimates we have
        \[\sum_{k=1}^{R}\abs{\error[X_{k}]}^{2} \ll X^{2}\delta^{-2}\log X+X^{3}\epsilon^{-1}\delta^{-1}\log X+RX^{4}\delta^{2}.\]
        The optimal choice for $\delta$ comes from $X^{4}\delta^{2}=X^{2}\delta^{-1}$, that is, $\delta=X^{-2/3}$, since $\epsilon R\asymp X$. This gives
        \begin{equation*}
            \sum_{k=1}^{R}\abs{\error[X_{k}]}^{2} \ll  X^{3+1/3}\log X+X^{3+2/3}\epsilon^{-1}\log X+RX^{2+2/3}.
        \end{equation*}
    \end{proof}

    \subsection{Spatial Average}\label{sec:spa}
    We now consider the spatial average. In this case the corresponding large sieve inequality was
    already proved by Chamizo in~\cite[Theorem~3.2]{chamizo11996} for $n$ dimensions for any cocompact group.
    It would not be difficult to extend it to cofinite groups in three dimensions.
    We state it as Theorem~\ref{thm:chamizo1} simplified to our setting. With similar strategy as in Section~\ref{sec:rad}, we prove the following
    proposition which readily yields Theorem~\ref{thm:three}.
    \begin{prop}\label{prop:two}
        Suppose $X>1$ and let $p_{1},p_{2},\ldots,p_{R}\in\hypmodg$ with $\tilde{d}(p_{k},p_{l})>\epsilon$ for
        some $\epsilon>0$. Then we have
        \[\sum_{k=1}^{R}\abs{\error[X][p_{k}]}^{2} \ll X^{4}R^{-1}\log^{2}X+X^{2}\epsilon^{-3}.\]
    \end{prop}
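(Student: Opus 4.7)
The proof should follow the same pattern as that of Proposition~\ref{prop:one}, but with Chamizo's spatial large sieve (Theorem~\ref{thm:chamizo1}) in place of the radial one (Theorem~\ref{thm:chamizo2}). Starting from the smoothed sandwich~\eqref{eq:newerror}, squaring and summing over $k$ reduces matters to estimating $\sum_{k=1}^{R}\abs{\eerror[f][X][p_{k}]}^{2}$ up to the smoothing error $O(RX^{4}\delta^{2})$, where $f$ is chosen as $f^{+}$ or $f^{-}$ according to the sign of the contribution at each index~$k$.

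Next, I would split the spectral expansion of $\eerror[f]$ over the three ranges $A_{1}, A_{2}, A_{3}$ introduced in the proof of Proposition~\ref{prop:one}. The pointwise bounds $S_{1}=O(X)$ and $S_{3}=O(X\delta)$ established there are already uniform in $p$, so their contribution to $\sum_{k}\abs{\eerror[f]}^{2}$ is absorbed into the $O(RX^{4}\delta^{2})$ smoothing error. For the main range $A_{2}$, dyadic decomposition $T=2^{n}$ with $1\leq T\leq \delta^{-3}$, followed by Cauchy--Schwarz in $n$ (at the cost of a $\log X$), reduces matters to bounding $\sum_{k=1}^{R}\abs{\ssum[T][X][p_{k}]}^{2}$ for each dyadic $T$. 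Theorem~\ref{thm:chamizo1} applied with coefficients $a_{j}=2c(f,t_{j})\period_{j}$, combined with the estimates of Proposition~\ref{prop:estimates} and Lemma~\ref{lem:period} giving $\norm{a}_{\ast}^{2}\ll X^{2}\min(T^{-1}, T^{-5}\delta^{-4})$, then handles each dyadic block.

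The final step is to sum the resulting dyadic contributions, splitting at $T=\delta^{-1}$ where the minimum switches regime, and to optimize $\delta$ so as to balance the smoothing error $RX^{4}\delta^{2}$ against the dominant contribution from the sieve. The chief technical ingredient here is the correct three-dimensional form of Chamizo's spatial large sieve: its $\epsilon^{-3}$ scaling reflects the volume $\asymp\epsilon^{3}$ of a hyperbolic ball of radius $\epsilon$ in $\uphs$ and hence the packing density of $\epsilon$-separated points in $\hypmodg$. Once the inequality is in hand, the two terms $X^{4}R^{-1}\log^{2}X$ and $X^{2}\epsilon^{-3}$ of the proposition should emerge, respectively, from the smoothing/leading-sieve balance and from the packing part of the inequality, after which Theorem~\ref{thm:three} follows in the same way that Theorem~\ref{thm:two} was deduced from Proposition~\ref{prop:one}.
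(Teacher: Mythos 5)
Your proposal follows essentially the same route as the paper: sandwich $N(p,X)$ between $A(f^{\pm})$, absorb the smoothing error as $O(RX^{4}\delta^{2})$, reuse the ranges $A_{1},A_{2},A_{3}$ and the uniform-in-$p$ bounds $S_{1}+S_{3}=O(X^{2}\delta)$, decompose $A_{2}$ dyadically, apply Theorem~\ref{thm:chamizo1} with $\norm{a}_{\ast}^{2}\ll X^{2}\min(T^{-1},T^{-5}\delta^{-4})$, and optimise $\delta$ against $RX^{4}\delta^{2}$. The one place you diverge is the Cauchy--Schwarz step over the dyadic blocks: you take the flat bound, losing a factor $\log X$ on \emph{every} block, whereas the paper uses the weighted inequalities
\[
\Bigl(\sum_{k=1}^{n}a_{k}\Bigr)^{2}\ll\sum_{k=1}^{n}k^{2}a_{k}^{2},\qquad
\Bigl(\sum_{k=1}^{n}a_{k}\Bigr)^{2}\ll\sum_{k=1}^{n}(n+1-k)^{2}a_{k}^{2},
\]
to replace $\log X$ by $c_{T}=\min(\abs{\log T\delta^{3}+1},\log T)$, which is small at both ends of the dyadic range. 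The payoff is in the packing term: since $\sum_{T}\abs{c_{T}}^{2}T^{-1}\ll1$, the $\epsilon^{-3}$ contribution comes out as $X^{2}\epsilon^{-3}$ with no logarithm, while your flat version gives $X^{2}\epsilon^{-3}\log X$ (and $X^{2}\delta^{-2}\log X$ in place of the paper's $X^{2}\delta^{-2}\log^{2}X$ for the other term). This is only a logarithmic loss and still yields Theorem~\ref{thm:three} after dividing by $R$ with $R\epsilon^{3}\gg1$ and $R>X$, but it does not reproduce the proposition exactly as stated; if you want the clean $X^{2}\epsilon^{-3}$ you need the $c_{T}$ refinement. (You may also notice that the paper's own optimisation $\delta=R^{-1/4}X^{-1/2}$ produces $X^{3}R^{1/2}\log^{2}X$ rather than the stated $X^{4}R^{-1}\log^{2}X$; that discrepancy is in the source, not in your argument.)
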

    As before, Theorem~\ref{thm:three} follows easily.
    \begin{proof}[Proof of Theorem~\ref{thm:three}]
        We pick $\epsilon^{-3}\ll R$ and $R>X$. Then
        \[\frac{1}{R}\sum_{k=1}^{R}\abs{\error[X][p_{k}]}^{2} \ll X^{4}R^{-2}\log^{2}X+X^{2}\ll X^{2}\log^{2}X.\]
        For the integral limit we take hyperbolic balls of radius $\epsilon/2$ uniformly spaced in $M$.
        For small radii the
        volume of such a ball is $(4/3)\pi (\epsilon/2)^{3}$,~\cite[pg.~10]{elstrodt1998}. This is compatible
        with our assumption that $R\epsilon^{3}$ is bounded from below since $M$ is of finite volume.
    \end{proof}

    For the proof of Proposition~\ref{prop:two} we use the large sieve inequality in the following form.
    \begin{thm}[Theorem~3.2~in~\cite{chamizo11996}]\label{thm:chamizo1}
        Given $T>1$, $p_{1},\ldots,p_{R}\in M=\hypmodg$, if\\ $\tilde{d}(p_{k},p_{l})>\epsilon>0$ for all $k\neq l$, then
        \[\sum_{k=1}^{R}\abs*{\sum_{\abs{t_{j}}\leq T} a_{j}u_{j}(p_{k})}^{2}\ll (T^{3}+\epsilon^{-3})\norm{a}_{\ast}^{2},\]
        where $\norm{a}_{\ast}$ is as in~\eqref{eq:aeq}.
    \end{thm}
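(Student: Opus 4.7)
The plan is to establish the dual form of the inequality via a standard spectral envelope plus pretrace formula argument in $\uphs$. By $TT^{\ast}$ duality, the claim is equivalent to
\[
\sum_{\abs{t_{j}}\leq T}\abs[\bigg]{\sum_{k=1}^{R}b_{k}\overline{u_{j}(p_{k})}}^{2}\ll (T^{3}+\epsilon^{-3})\sum_{k=1}^{R}\abs{b_{k}}^{2},
\]
so I would prove this dual form instead. The strategy is to dominate the left-hand side spectrally by a non-negative envelope $h(t)\geq \mathbf{1}_{[-T,T]}(t)$ and then realise the resulting quadratic form geometrically via the pretrace formula.

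Concretely, set $\rho=\min(\epsilon/2,1/T)$ and take a non-negative even bump $\phi$ radially supported in $[0,\rho/2]$, normalised so that $h:=\abs{\widehat{\phi}}^{2}$ satisfies $h\geq 1$ on $[-T,T]$. Let $k$ be the inverse Harish-Chandra (Selberg) transform of $h$ on $\uphs$; it is non-negative and radially supported in the geodesic ball of radius $\rho$, and by the elementary three dimensional inversion formula (using the explicit spherical functions recorded in Section~\ref{sec:spec}, with Plancherel measure $\sim t^{2}dt$) one obtains the size bound $k(1)\ll \rho^{-3}$. The pretrace formula then gives
\[
K(p,q):=\sum_{\gamma\in\Gamma}k(\delta(p,\gamma q))=\sum_{j}h(t_{j})u_{j}(p)\overline{u_{j}(q)}.
\]

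Using $h\geq \mathbf{1}_{[-T,T]}$ and expanding,
\[
\sum_{\abs{t_{j}}\leq T}\abs[\bigg]{\sum_{k}b_{k}\overline{u_{j}(p_{k})}}^{2}\leq \sum_{j}h(t_{j})\abs[\bigg]{\sum_{k}b_{k}\overline{u_{j}(p_{k})}}^{2}=\sum_{k,l}b_{k}\overline{b_{l}}\,K(p_{l},p_{k}).
\]
For $k\neq l$ the spacing $\tilde{d}(p_{k},p_{l})>\epsilon\geq 2\rho$ ensures that no $\Gamma$-orbit representative of $p_{l}$ lies within geodesic distance $\rho$ of $p_{k}$, hence every off-diagonal term vanishes. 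On the diagonal, cocompactness of $\Gamma$ bounds uniformly in $p_{k}$ the number of $\gamma$ with $d(p_{k},\gamma p_{k})\leq \rho$, giving $K(p_{k},p_{k})\ll k(1)\ll \rho^{-3}\leq T^{3}+\epsilon^{-3}$. Summing over $k$ yields the required bound.

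The main obstacle will be constructing the pair $(h,k)$ realising the lower bound $h\geq 1$ on $[-T,T]$ together with $k$ radially supported in the ball of radius $\rho$ and the sharp size $k(1)\ll\rho^{-3}$. This is precisely an uncertainty-principle tension for the three dimensional spherical transform, and it is this tension that produces the dichotomy $T^{3}+\epsilon^{-3}$: when $\epsilon\ll 1/T$ the envelope must be supported at a spectral scale wider than $[-T,T]$ so that $k$ can localise at scale $\epsilon$, contributing $\epsilon^{-3}$; when $\epsilon\gtrsim 1/T$ one may localise $k$ at the finer scale $1/T$ and pay only the Weyl-type cost $T^{3}$. All the needed identities in $\uphs$ are elementary (see Section~\ref{sec:spec}), so the remaining estimates are routine.
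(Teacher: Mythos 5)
Your argument is correct in outline and, once the Fej\'er-type pair $(h,k)$ is constructed, essentially complete; but it is not the route of the source. The paper does not prove Theorem~\ref{thm:chamizo1} at all --- it imports it from Chamizo --- and Chamizo's proof (which the paper does reproduce in spirit when it proves the radial analogue, Theorem~\ref{thm:chamizo2}) runs through the duality/matrix-norm lemma (Lemma~\ref{chamizolemma}) with a \emph{Gaussian} spectral weight: the inverse Selberg transform then has no compact support, the off-diagonal entries $S_{kl}=\sum_{j}h(t_{j})u_{j}(p_{k})\overline{u_{j}(p_{l})}$ do not vanish, and one bounds them by $\min(T^{3},\tilde{d}(p_{k},p_{l})^{-3})$ using the Gaussian decay of $k$ together with the hyperbolic lattice point count, before summing over $l$ via the spacing condition. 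Your compactly supported $k=\phi\ast\phi$ kills every off-diagonal term outright, which is cleaner and avoids both the matrix lemma and the lattice-point tail estimate; the price is that positivity is essential, so the argument is confined to the second moment (Chamizo's scheme also yields fourth moments), and you must verify $h(t_{j})\gg 1$ not only on $[-T,T]$ but also on the exceptional spectrum $t_{j}\in i(0,1]$ --- this does hold, since the spherical functions $\sinh(\sigma r)/(\sigma\sinh r)$ are bounded below for $r\leq\rho\leq 1$, but it needs to be said because the sum $\sum_{\abs{t_{j}}\leq T}$ includes these terms. Two further points to make explicit: the diagonal bound uses $k(\delta)\leq k(1)=\norm{\phi}_{2}^{2}\asymp\rho^{-3}$ (Cauchy--Schwarz for $\phi\ast\phi$) together with the uniform bound, valid for cocompact $\Gamma$ possibly with torsion, on the number of $\gamma$ with $d(p,\gamma p)\leq\rho$; and with $\rho=\min(\epsilon/2,1/T)$ this gives $\rho^{-3}\ll T^{3}+\epsilon^{-3}$ exactly as claimed. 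So the proposal stands as a valid, more elementary proof of the second-moment inequality, genuinely different from the cited one.
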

    We can then prove Proposition~\ref{prop:two}.
    \begin{proof}[Proof of Proposition~\ref{prop:two}]
        By a direct application of the Cauchy--Schwarz inequality we have
        \[\left(\sum_{k=1}^{n}a_{k}\right)^{2} \ll \sum_{k=1}^{n}k^{2}a_{k}^{2}.\]
        Thus,
        \begin{equation}\label{eq:logbound}
            \abs*{\sum_{1\leq T\leq\delta^{-3}}\ssum[T][X][p_{k}]}^{2} \ll\sum_{1\leq T\leq\delta^{-3}}\abs{\log T}^{2}\abs{\ssum[T][X][p_{k}]}^{2}.
        \end{equation}
        Repeating this with the identity
        \[\left(\sum_{k=1}^{n}a_{k}\right)^{2} \ll \sum_{k=1}^{n}(n+1-k)^{2}a_{k}^{2},\]
        and combining with~\eqref{eq:logbound} yields
        \[\abs*{\sum_{1\leq T\leq\delta^{-3}}\ssum[T][X][p_{k}]}^{2} \ll\sum_{1\leq T\leq\delta^{-3}}\abs{c_{T}}^{2}\abs{\ssum[T][X][p_{k}]}^{2},\]
        where $c_{T}=\min\left(\abs{\log T\delta^{3}+1},\log T\right)$.
        Repeating the analysis from the proof of Proposition~\ref{prop:one} and applying Theorem~\ref{thm:chamizo1} gives
        \begin{align*}
            \sum_{k=1}^{R}\abs{\error[X][p_{k}]}^{2} &\ll \sum_{1\leq T\leq \delta^{-3}}\abs{c_{T}}^{2}(T^{3}+\epsilon^{-3})X^{2}T^{-1}\min(1,T^{-4}\delta^{-4})+RX^{4}\delta^{2}\\
            &\ll X^{2}\delta^{-2}\log^{2}X + X^{2}\epsilon^{-3} + RX^{4}\delta^{2},
        \end{align*}
        as before. We optimise by setting $X^{2}\delta^{-2}=RX^{4}\delta^{2}$, which gives $\delta = R^{-1/4}X^{-1/2}$. This yields
        \[\sum_{k=1}^{R}\abs{\error[X][p_{k}]}^{2} \ll X^{3}R^{1/2}\log^{2}X+X^{2}\epsilon^{-3}.\]
    \end{proof}

    \subsection{A Large Sieve Inequality}\label{sec:lemmas}
    We need two technical lemmas to prove Theorem~\ref{thm:chamizo2}. The first one is Lemma~3.2~in~\cite{chamizo11996}.
    \begin{lemma}\label{chamizolemma}
        Let $b=(b_{1},\ldots,b_{R})\in\complex^{R}$ be a unit vector and let $A=(a_{ij})$ be an $R\times R$ matrix over $\complex$ with
        $\abs{a_{ij}}=\abs{a_{ji}}$. Then
        \[\abs{b\cdot Ab}=\abs{\sum_{i,j=1}^{R}b_{i}\overline{b}_{j}a_{ij}}\leq \max_{i}\sum_{j=1}^{R}\abs{a_{ij}}.\]
    \end{lemma}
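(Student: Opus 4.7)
The plan is to bound the Hermitian-type form $b \cdot Ab$ by pure moduli and then use the AM--GM inequality $|b_i \overline{b}_j| \le \tfrac{1}{2}(|b_i|^2 + |b_j|^2)$ together with the symmetry assumption $|a_{ij}| = |a_{ji}|$ to reduce to a single sum weighted by $|b_i|^2$. The unit-vector hypothesis then collapses the bound to a maximum over rows.

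Concretely, I would first apply the triangle inequality to get
\[
\biggl|\sum_{i,j=1}^{R} b_i \overline{b}_j a_{ij}\biggr| \le \sum_{i,j=1}^{R} |b_i|\,|b_j|\,|a_{ij}|.
\]
Next, estimate $|b_i|\,|b_j| \le \tfrac{1}{2}(|b_i|^2 + |b_j|^2)$ to split the right-hand side into two pieces. In the piece involving $|b_j|^2 |a_{ij}|$, relabel the indices $i \leftrightarrow j$ and invoke $|a_{ji}| = |a_{ij}|$; this shows that both pieces are equal, so the sum simplifies to $\sum_{i,j} |b_i|^2 |a_{ij}|$.

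Finally, factor as $\sum_i |b_i|^2 \bigl(\sum_j |a_{ij}|\bigr)$, bound the inner sum by $\max_i \sum_j |a_{ij}|$, and pull this constant out of the sum. Since $b$ is a unit vector, $\sum_i |b_i|^2 = 1$, which yields the desired inequality.

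There is no real obstacle here: the only subtle point is ensuring the symmetry hypothesis $|a_{ij}| = |a_{ji}|$ is used exactly once in the index swap, and that $b$ being a unit vector (rather than merely bounded) is what allows the final constant to be precisely $\max_i \sum_j |a_{ij}|$ with no extra factor. The argument is short and purely formal, so I would present it directly rather than via any intermediate lemma.
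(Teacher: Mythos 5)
Your proof is correct and complete: the triangle inequality, the AM--GM bound $\abs{b_i}\abs{b_j}\le\tfrac12(\abs{b_i}^2+\abs{b_j}^2)$, the index swap using $\abs{a_{ij}}=\abs{a_{ji}}$, and the normalisation $\sum_i\abs{b_i}^2=1$ fit together exactly as you describe, and the symmetry hypothesis is used precisely once where it is needed. Note that the paper itself gives no proof of this statement --- it simply quotes it as Lemma~3.2 of Chamizo's large sieve paper --- so the only comparison available is with the source, where the argument is the same standard Schur-test (row-sum) bound you have written down; there is nothing further to add.
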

    We also need to compute the inverse Selberg transform for the Gaussian at different frequencies.
    \begin{lemma}\label{lem:selberg}
        Let $h(1+t^{2})=e^{-t^{2}/(2T)^{2}}\cos(rt)$. The inverse Selberg transform $k$ of $h$ satisfies for all $x>0$
        \begin{align*}
            k(\cosh x) &\ll T^{3}\frac{(x+r)e^{-T^{2}(x+r)^{2}}+(x-r)e^{-T^{2}(x-r)^{2}}}{\sinh x},
            \shortintertext{and}
            k(1) &\ll \min(T^{3},r^{-3}).
        \end{align*}
    \end{lemma}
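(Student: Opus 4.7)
The plan is to derive an explicit formula for $k$ via the inverse Selberg transform on $\uphs$, which in this odd-dimensional setting reduces to a one-dimensional Fourier sine transform. From the spherical function $\sin(tx)/(t\sinh x)$ and integration in polar coordinates one obtains the direct relation $t\,h(1+t^{2}) = 4\pi\int_{0}^{\infty} k(\cosh x)\sinh x\,\sin(tx)\d{x}$, and Fourier sine inversion then gives
\[k(\cosh x) = \frac{1}{2\pi^{2}\sinh x}\int_{0}^{\infty} t\, h(1+t^{2})\sin(tx)\d{t}.\]
Substituting $h(1+t^{2}) = e^{-t^{2}/(2T)^{2}}\cos(rt)$ and expanding $\cos(rt)\sin(tx) = \frac{1}{2}[\sin((x+r)t) + \sin((x-r)t)]$ reduces the problem to the Gaussian sine integrals $\int_{0}^{\infty} t\, e^{-t^{2}/(2T)^{2}}\sin(st)\d{t}$.

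These are classical: writing $t\,e^{-t^{2}/(2T)^{2}} = -2T^{2}\,\frac{d}{dt}e^{-t^{2}/(2T)^{2}}$ and integrating by parts reduces them to the Gaussian Fourier transform $\int_{\reals} e^{-t^{2}/(2T)^{2}}\cos(st)\d{t} = 2T\sqrt{\pi}\,e^{-T^{2}s^{2}}$, yielding $\int_{0}^{\infty} t\, e^{-t^{2}/(2T)^{2}}\sin(st)\d{t} = 2\sqrt{\pi}\, T^{3}\, s\, e^{-T^{2}s^{2}}$. Substituting $s = x\pm r$ and combining the two terms produces the explicit identity
\[k(\cosh x) = \frac{T^{3}}{2\pi^{3/2}\sinh x}\Bigl[(x+r)e^{-T^{2}(x+r)^{2}} + (x-r)e^{-T^{2}(x-r)^{2}}\Bigr],\]
which after passing to absolute values yields the first claimed bound.

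For the bound on $k(1)$, both the bracket and $\sinh x$ vanish at $x=0$, so I would apply L'Hopital's rule. Differentiating each summand $(x\pm r)e^{-T^{2}(x\pm r)^{2}}$ gives $(1-2T^{2}(x\pm r)^{2})e^{-T^{2}(x\pm r)^{2}}$, and the limit at $x = 0$ equals $2(1-2T^{2}r^{2})e^{-T^{2}r^{2}}$, producing $k(1) = \pi^{-3/2}T^{3}(1-2T^{2}r^{2})e^{-T^{2}r^{2}}$. The bound $k(1)\ll T^{3}$ follows from the boundedness of $u\mapsto(1+u^{2})e^{-u^{2}}$ on $[0,\infty)$, while $k(1)\ll r^{-3}$ follows by multiplying through by $r^{3}$ and substituting $u = Tr$, giving the bounded function $\pi^{-3/2}u^{3}(1-2u^{2})e^{-u^{2}}$. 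Combining the two estimates yields the claimed minimum.

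The computation is essentially elementary; the only real obstacles are fixing the correct normalisation in the inverse Selberg transform on $\uphs$ (which is why I would begin from the direct transform rather than cite an inversion formula) and handling the $0/0$ limit for $k(1)$ carefully. The simplicity of the resulting formula is consistent with the remark made earlier in the paper that special functions in odd-dimensional hyperbolic space reduce to elementary ones.
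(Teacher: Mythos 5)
Your proposal is correct and follows essentially the same route as the paper: both reduce the inverse Selberg transform on $\uphs$ to an explicit Gaussian Fourier computation, arrive at the identical closed form $k(\cosh x)=\tfrac{1}{2\pi^{3/2}}T^{3}\bigl((x+r)e^{-T^{2}(x+r)^{2}}+(x-r)e^{-T^{2}(x-r)^{2}}\bigr)/\sinh x$, and obtain $k(1)=\pi^{-3/2}T^{3}(1-2T^{2}r^{2})e^{-T^{2}r^{2}}$ by the same limit, with the same two-case bound via the substitution $u=Tr$. The only cosmetic difference is that you re-derive the inversion formula from the spherical function $\sin(tx)/(t\sinh x)$ and Fourier sine inversion, whereas the paper quotes the equivalent formula $-2\pi k(x)=\frac{d}{dx}\frac{1}{2\pi}\int_{\reals}h(1+t^{2})e^{-it\arccosh x}\d{t}$ from Elstrodt--Grunewald--Mennicke.
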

    \begin{proof}[Proof of Lemma~\ref{lem:selberg}]
        According to~\cite[\S3~Lemma~5.5]{elstrodt1998}, the inverse Selberg transform $k$ of $h$ for $x\geq1$
        is given by
        \[-2\pi k(x)=\frac{d}{dx}\frac{1}{2\pi}\int_{-\infty}^{\infty}h(1+t^{2})e^{-it\arccosh x}\d{t}.\]
        Hence, by a direct computation
        \begin{equation*}
            -2\pi k(\cosh x) = \frac{1}{2\pi i}\frac{1}{\sinh x}\int_{-\infty}^{\infty}e^{-t^{2}/(2T)^{2}}\cos(rt)te^{-itx}\d{t},
        \end{equation*}
        which is just the Fourier transform of a Gaussian times $t\cos(rt)$. Hence, by standard results
        \cite[17.22~(2)]{gradshteyn2007} and~\cite[17.23~(13)]{gradshteyn2007}
        we have
        \[k(\cosh x) = \frac{2\sqrt{\pi}}{4\pi^{2}}T^{3}g(x),\]
        where
        \[g(x)=\frac{(x+r)e^{-T^{2}(x+r)^{2}}+(x-r)e^{-T^{2}(x-r)^{2}}}{\sinh x}.\]
        Taking the limit gives
        \begin{align*}
            \lim_{x\tendsto 0} g(x) 
            &=2e^{-T^{2}r^{2}}(1-2T^{2}r^{2}).
        \end{align*}
        Let $u(x)=2e^{-x^{2}}(1-2x^{2})$. Hence,
        \[k(1) = \frac{1}{2\pi^{3/2}}T^{3}u(rT).\]
        Since $u(x)$ is bounded, we get trivially that $k(1)\ll T^{3}$. On the other hand,
        \[T^{3}u(rT)=\frac{1}{r^{3}}(Tr)^{3}u(rT)\ll r^{-3}.\]
        It follows that $k(1)\ll\min(T^{3},r^{-3})$.
    \end{proof}
    \begin{proof}[Proof of Theorem~\ref{thm:chamizo2}]
        Let $S$ be the left-hand side of~\eqref{eq:chamizo2}.
        Since $\complex^{R}$ is self-dual, it follows from Riesz representation theorem that
        there exists a unit vector $\vec{b}=(b_{1},\ldots,b_{R})$ in $\complex^{R}$ such that
        \[ S = \left(\sum_{k=1}^{R}b_{k}\left(\sum_{\abs{t_{j}}\leq T}a_{j}x^{it_{j}}_{k}u_{j}(p)\right)\right)^{2}.\]
        Then, by the Cauchy--Schwarz inequality
        \[ S\leq \norm{a}^{2}_{\ast}\widetilde{S},\]
        where
        \[\widetilde{S} = \sum_{\abs{t_{j}}\leq T}\abs*{\sum_{k=1}^{R}b_{k}x^{it_{j}}_{k}u_{j}(p)}^{2}.\]
        In order to understand the sum $\widetilde{S}$, we smooth it out by a Gaussian centered around zero.
        This allows us to apply Lemma~\ref{lem:selberg}, which shows that the
        Selberg transform for a Gaussian is easy to compute. Thus,
        \[\widetilde{S} \ll \sum_{j}e^{-t_{j}^{2}/(4T^{2})}\abs*{\sum_{k=1}^{R}b_{k}x^{it_{j}}_{k}u_{j}(p)}^{2}.\]
        After we open up the squares and interchange the order of summation, we apply Lemma~\ref{chamizolemma} to get
        \[S\ll \norm{a}^{2}_{\ast}\max_{k}\sum_{l=1}^{R}\abs{S_{kl}},\]
        where
        \[S_{kl} = \sum_{j}e^{-t_{j}^{2}/(4T^{2})}\cos(r_{kl} t_{j})\abs{u_{j}(p)}^{2},\]
        and
        \[r_{kl} = \abs[\bigg]{\log\frac{x_{k}}{x_{l}}}.\]
        We can identify $S_{kl}$ as the diagonal contribution in the spectral expansion of an automorphic kernel with $h(1+t^{2})=e^{-t^{2}/(4T^{2})}\cos(r_{kl}t)$.
        It follows from Lemma~\ref{lem:selberg} that
        \begin{equation}\label{eq:specexp}
            S_{kl}\ll\min(T^{3},r_{kl}^{-3})+\sum_{\gamma\neq\id}T^{3}e^{-T^{2}(d(\gamma p, p)-r_{kl})^{2}}.
        \end{equation}
        The standard hyperbolic lattice point problem (e.g.~\cite[\S2~Lemma~6.1]{elstrodt1998}) gives
        \[\#\Set{\gamma\in\Gamma\given\delta(p,\gamma q)\leq x}\ll x^{2},\]
        where the implied constant depends on $\Gamma$ and $p$. We can rewrite this as
        \[\log(1+\#\Set{\gamma\in\Gamma\given r<d(p,\gamma q)\leq r+1})\ll r^{2}+1.\]
        This shows that the series in~\eqref{eq:specexp} converges as $T\tendsto\infty$, so that
        \[S_{kl}\ll\min(T^{3},r_{kl}^{-3}).\]
        Hence, by the mean value theorem
        \[\sum_{l=1}^{R}\abs{S_{kl}}\ll\sum_{l=1}^{R}\min(T^{3},X^{3}\abs{x_{k}-x_{l}}^{-3}).\]
        The case $l=k$ yields $T^{3}$. So suppose $l\neq k$, then
        separate the $x_{l}$ for which $T\leq X\abs{x_{k}-x_{l}}^{-1}$. By the spacing condition, there
        are at most $2XT^{-1}\epsilon^{-1}$ such points. Hence,
        \begin{align}
            \sum_{l=1}^{R}\abs{S_{kl}}&\ll T^{3}XT^{-1}\epsilon^{-1}+\int_{1}^{\infty}\frac{X^{3}}{\abs{XT^{-1}+\epsilon u}^{3}}\d{u} + T^{3}\label{eq:improvement}\\
            &\ll T^{2}X\epsilon^{-1}+T^{3}.\notag
        \end{align}
    \end{proof}

    \printbibliography
\end{document}